\newtheoremstyle{theorem}{1em}{1em}{\slshape}{0pt}{\bfseries}{.}{ }{}
\theoremstyle{theorem}
\newtheorem{theorem}{Theorem}
\newtheorem*{theorem*}{Theorem}
\newtheorem{lemma}[theorem]{Lemma}
\newtheorem{claim}[theorem]{Claim}
\newtheorem{definition}[theorem]{Definition}
\theoremstyle{remark}
\newtheorem*{remark*}{Remark}
\newtheorem*{question*}{Question}
\providecommand{\setZ}{\mathbb{Z}}
\providecommand{\setR}{\mathbb{R}}
\newcommand{\conv}{\textrm{conv}}
\newcommand{\supp}{\textrm{supp}}
\newcommand{\rk}{\textrm{rk}}
\newenvironment{proofofclaim}{\vspace{1ex}\noindent{\emph{Proof of claim.}}\hspace{0.5em}}
   	    {\hfill$\lozenge$\vspace{1ex}}
        \def\drawRect#1#2#3#4#5{
           \FPeval{\x2}{(#2) + (#4)} 
           \FPeval{\y2}{(#3) + (#5)} 
           \pspolygon[#1](#2,#3)(\x2,#3)(\x2,\y2)(#2,\y2)
        }
\date{\today} %, \currenttime} 
\begin{document}
%\begin{comment}
\title{0/1 Polytopes with Quadratic Chvátal Rank} 

\author{Thomas Rothvoß\thanks{M.I.T. Email: { \tt{rothvoss@math.mit.edu}}. Supported by the Alexander von Humboldt Foundation within the Feodor Lynen program, by ONR grant N00014-11-1-0053 and by NSF contract
CCF-0829878.}  \and  Laura Sanità\thanks{University of Waterloo. Email: {\tt lsanita@uwaterloo.ca}}  }% { \tt{rothvoss@math.mit.edu}}}

\maketitle

%\vspace{-0.5cm} \begin{center}  {\bf Draft --- please do not distribute}  \end{center} \vspace{0.3cm}

\begin{abstract}
For a polytope $P$, the \emph{Chvátal closure} $P' \subseteq P$ is obtained by simultaneously
strengthening all feasible inequalities $cx \leq \beta$ (with integral $c$) to $cx \leq \lfloor\beta\rfloor$.
The number of iterations of this procedure that are needed until the integral hull of $P$
is reached is called the \emph{Chvátal rank}. 
%Repeating this procedure for a finite number -- called the \emph{Chvátal rank} -- of iterations yields the integral hull of $P$. 
If $P \subseteq [0,1]^n$, then it is known that $O(n^2 \log n)$ iterations
always suffice (Eisenbrand and Schulz (1999)) and at least $(1+\frac{1}{e}-o(1))n$ 
iterations are sometimes needed (Pokutta and Stauffer (2011)), 
leaving a huge gap between lower and upper bounds.

We prove that there is a polytope contained in the 0/1 cube that has Chvátal rank $\Omega(n^2)$, 
closing the gap up to a logarithmic factor. 
In fact, even a superlinear lower bound was mentioned as an open problem by several authors.
Our choice of $P$ is the convex hull of a 
semi-random Knapsack polytope and a single fractional vertex. The main technical ingredient
is linking the Chvátal rank to \emph{simultaneous Diophantine approximations} w.r.t. the $\|\cdot\|_1$-norm
of the normal vector defining $P$. 
\end{abstract}

\section{Introduction}

Gomory-Chvátal cuts are among the most important classes of cutting planes 
used to derive the integral hull of polyhedra.
The fundamental idea to derive such cuts is that if an inequality $cx \leq \beta$ is \emph{valid} for a polytope $P$
(that is, $cx \leq \beta$ holds for every $x \in P$) and $c \in \setZ^n$, then $cx \leq \lfloor\beta\rfloor$ is valid for 
the integral hull $P_I:= \conv( P \cap \setZ^n )$.
Formally, for a polytope $P \subseteq \setR^n$ and a vector $c \in \setZ^n$,
\[
  GC_P(c) := \big\{ x \in \setR^n \mid cx \leq \lfloor\max\{ cy \mid y \in P \}\rfloor  \big\} 
\]
is the \emph{Gomory-Chvátal Cut} that is induced by vector $c$ (for polytope $P$). 
Furthermore,
\[
  P' := \bigcap_{c \in \setZ^n} GC_P(c)
\]
is the \emph{Gomory-Chvátal closure} of $P$. 
%If $P$ has the inequality description 
%$P = \{ x \in \setR^n \mid Ax \leq b\}$, then by linear programming duality, the
%Chvátal closure can equivalently be described by $P' = \bigcap_{y \in \setR^m: yA \in \setZ^n} \{ x \mid yAx \leq \lfloor %yb\rfloor\}$.

Let $P^{(i)} := (P^{(i-1)})'$ (and $P^{(0)} = P$)
be the \emph{$i$th Gomory-Chvátal closure} of $P$. The \emph{Chvátal rank} $\rk(P)$ is the 
smallest number such that $P^{(\rk(P))} = P_I$.

%and $P_I := \conv( P \cap \setZ^n )$ be the \emph{integral hull}.

It is well-known that the Chvátal rank is always finite, but can be arbitrarily large already for 2 dimensional polytopes.
However, if we restrict our attention to polytopes $P \subseteq [0,1]^n$ contained in the $0/1$ cube 
the situation becomes much different, since the  Chvátal rank can be bounded by a function
in $n$. In particular, Bockmayr, Eisenbrand, Hartmann and Schulz~\cite{DBLP:journals/dam/BockmayrEHS99} 
provided the first polynomial upper bound of $\rk(P)\leq O(n^3 \log n)$.
Later, Eisenbrand and Schulz \cite{ChvatalRankEisenbrandSchulzIPCO99,ChvatalRankEisenbrandSchulzCombinatorica03}
proved that $\rk(P) \leq O(n^2 \log n)$, which is still the best known upper bound. 
Note that if $P \subseteq [0,1]^n$ and $P \cap \{ 0,1\}^n = \emptyset$, then even $\rk(P) \leq n$ (and this is
tight if and only if $P$ intersects all the edges of the hypercube~\cite{EmptyPolytopesWithGCRankN-PokuttaSchulzORL11}).
Already \cite{On-cutting-Plane-proofs-ChvatalCookHartmann1989} provided lower bounds on the rank
for the polytopes corresponding to natural problems like stable-set, set-covering, set-partitioning, knapsack,
maxcut and ATSP (however, none of the bounds exceeded $n$). 
The paper of Eisenbrand and Schulz~\cite{ChvatalRankEisenbrandSchulzIPCO99,ChvatalRankEisenbrandSchulzCombinatorica03} also provides a lower bound $\rk(P) > (1 + \varepsilon)n$ for a tiny constant $\varepsilon >0$, which has been 
quite recently improved by
 Pokutta and Stauffer~\cite{ChvatalLowerBounds-PokuttaStaufferORL11}
 to $(1+\frac{1}{e}-o(1))n$. 
However, as the authors of \cite{EmptyPolytopesWithGCRankN-PokuttaSchulzORL11} state, there is still a 
very large gap between the best known upper and lower bound.
In particular, the question whether there is any \emph{superlinear} lower bound on the rank
of a polytope in the $0/1$ cube is open since many years (see e.g.  
Ziegler~\cite{0-1-PolytopesZiegler97}).

There is a large amount of results on structural properties of the CG closure. 
Already Schrijver~\cite{OnCuttingPlanes-Schrivjer1980} could prove that that the closure
of a rational polyhedron is again described by finitely many inequalities.
 Dadush, Dey and Vielma~\cite{CGclosureOfStrictlyConvexBodyDadushDeyVielmaMOR11} showed that $K'$ is a polytope for all compact and strictly convex sets $K \subseteq \setR^n$. 
Later,  Dunkel and Schulz~\cite{GCclosure-of-non-rational-polytopes-DunkelSchulz2011} could prove the same if $K$ is an irrational polytope, while in parallel
again Dadush, Dey and Vielma~\cite{CGclosureOfConvexSet-DadushDeyVielmaIPCO2011} showed 
that this holds in fact for \emph{any} compact convex set.

In the last years, automatic procedures that strengthen existing relaxations became more and more
popular in theoretical computer science. 
Singh and Talwar~\cite{ImprovingIntegralityGapsViaCG-SinghTalwarAPPROX10} showed that few CG rounds reduce the integrality gap for $k$-uniform hypergraph matchings.
However, to obtain approximation algorithms researchers rely more on \emph{Lift-and-Project Methods}
%that provide different theoretical properties. These are 
such as the hierarchies of
\emph{Balas, Ceria, Cornuéjols}~\cite{BalasCeriaCornuejols-Hierarchy-MathProg93}; 
\emph{Lovász, Schrijver}~\cite{LovaszSchrijverHierarchy91}; %(with LP-strengthening $LS$ and an SDP-strengthening $LS_+$); 
\emph{Sherali, Adams}~\cite{SheraliAdamsHierarchy1990}
or \emph{Lasserre}~\cite{ExplicitExactSDP-Lasserre-IPCO01,GlobalOpt-Lasserre01}.
One can optimize over the $t$th level in time $n^{O(t)}$. 
Moreover, all those hierarchies converge to the integral hull already after $n$ iterations.
In contrast, the membership problem for $P'$ is $\mathbf{coNP}$-hard~\cite{CG-membership-hard-Eisenbrand99}. 
%In contrast, the membership problem for $P'$ is $\mathbf{coNP}$-hard for general polytopes $P\subseteq\setR^n$~\cite{CG-membership-hard-Eisenbrand99} (it is still open, whether the same holds if $P \subseteq [0,1]^n$).
We refer to the surveys of Laurent~\cite{SDP-hierarchies-Survey-Laurent2003} and Chlamtá\v c and Tulsiani~\cite{ConvexRelaxations-survey-Chlamtac-Tulsiani}
for a detailed  comparison.

%Note that $\rk(P) \leq O(n^2 \log n)$ \cite{ChvatalRankEisenbrandSchulzIPCO99,ChvatalRankEisenbrandSchulzCombinatorica03} (here it is important, that $P \subseteq [0,1]$). 

%In the following let $P \subseteq [0,1]^n$ be a polytope in the $0/1$ cube.
In this paper, we prove that there is a polytope contained in the $0/1$ cube that has Chvátal rank $\Omega(n^2)$, 
closing the gap up to a logarithmic factor. Specifically, our main result is:
\begin{theorem}
For every $n$, there exists a vector $c \in \{ 0,\ldots,2^{n/16}\}^n$ such that
the polytope
\[
 P = \conv\Big\{\Big\{ x \in \{ 0,1\}^n : \sum_{i=1}^n c_ix_i \leq \frac{\|c\|_1}{2} \Big\} \cup \Big\{ \Big(\frac{3}{4},\ldots,\frac{3}{4}\Big) \Big\} \Big\} \subseteq [0,1]^n
\]
has Chvátal rank $\Omega(n^2)$.
\end{theorem}

Here $\|c\|_1 := \sum_{i=1}^n |c_i|$ and $\|c\|_{\infty} := \max_{i=1,\ldots,n} |c_i|$.

\section{Outline}

In the following, we provide an informal outline of our approach.
\begin{enumerate}
\item[(1)] \emph{The polytope.} Our main result is to show that the polytope
\[
   P(c,\varepsilon) := \conv\Big\{ \Big\{ x \in \{ 0,1\}^n :  cx \leq \frac{\|c\|_1}{2} \Big\} \cup \{ x^*(\varepsilon) \} \Big\}
\]
has a Chvátal rank of $\Omega(n^2)$, where $x^* := x^*(\varepsilon) := (\frac{1}{2} + \varepsilon,\ldots,\frac{1}{2} + \varepsilon)$ (see Figure~\ref{fig:PolytopeP}.(a)).
We can choose $\varepsilon := \frac{1}{4}$ and each $c_i$ will be an integral coefficient
of order $2^{\Theta(n)}$ --- however, we postpone the precise choice of $c$ for now.
Intuitively spoken,  $P$ is a Knapsack polytope defined by inequality $cx \leq \frac{\|c\|_1}{2}$
plus an extra fractional vertex $x^*$. Observe that the vector $x^*(0) = (\frac{1}{2},\ldots,\frac{1}{2})$
satisfies that constraint with equality. 
\begin{figure}
\begin{center}
\psset{unit=4cm}
\subfigure[]{ %[Polytope $P=P(c,\varepsilon)$]{
\begin{pspicture}(0.2,-0.25)(1.7,1.15)
\pspolygon[linestyle=dashed,linewidth=0.75pt](0,0)(0,1)(1,1)(1,0)
\pspolygon[fillstyle=solid,fillcolor=lightgray,linewidth=0.75pt](0,0)(0,1)(0.7,0.7)(1,0)
%\pspolygon[fillstyle=hlines,fillcolor=gray,hatchcolor=gray,linewidth=1.0pt](0,0)(0,1)(1,0)
%\psdots[linewidth=2pt](0,0)(0,1)(1,0)(1,1)
\cnode*(0,0){2.5pt}{x00}
\cnode*(1,0){2.5pt}{x10}
\cnode*(0,1){2.5pt}{x01}
\cnode*(1,1){2.5pt}{x11}
\psline[linewidth=1.5pt](0,1)(1,0) %\rput[r](0.25,0.70){$H$}
\rput[c](0.4,0.2){$P$}
\cnode*(0.5,0.5){2.5pt}{z} \nput[labelsep=0pt]{-135}{z}{$(\frac{1}{2},\ldots,\frac{1}{2})$}
\pnode(0.9,0.9){c}
\cnode*(0.7,0.7){2.5pt}{y} \nput{0}{y}{$x^* = (\frac{1}{2}+\varepsilon,\ldots,\frac{1}{2}+ \varepsilon)$}
%\psline[linewidth=2pt](0,1.1)(0.7,0.7)(1.4,0.3) \rput[l](1.43,0.3){$cx \leq \beta$}
\ncline[linestyle=dashed]{->}{z}{c} %\nbput[npos=1]{$(1,\ldots,1)$}
\psline[linewidth=1.5pt](-0.2,1.2)(1.2,-0.2) \rput[l](1.15,-0.1){$cx\leq\|c\|_1/2$}
%\psline[linecolor=darkgray,linewidth=2pt](0.375,1.25)(0.5,1)(1,0)(1.125,-0.25) % c_1
%\rput[l](0.47,1.1){\darkgray{$c_1x=\max\{ c_1y \mid y \in P^{(k)}\}$}}
%\psline[linewidth=2pt,linecolor=blue](0.25,1.25)(0.4,1)(1,0)(1.15,-0.25) \rput[r](1,-0.1){$\blue{cx=\alpha}$} % cx=alpha
%\psline[linewidth=2pt,linecolor=red](0.32,1.25)(1.22,-0.25) \rput[l](1.2,-0.1){\red{$cx=\gamma$}} % cx=

%\cnode*(0.47,1){3pt}{xhat} \nput[labelsep=1pt]{-135}{xhat}{$\hat{x}$}
%\cnode*(1,0){3pt}{xI} \nput{-135}{xI}{$x_I$}
%\rput(
\end{pspicture}
}
\subfigure[]{
\begin{pspicture}(-0.1,-0.25)(1.5,1.15)
\pspolygon[linestyle=dashed,linewidth=0.75pt](0,0)(0,1)(1,1)(1,0)
\pspolygon[fillstyle=solid,fillcolor=lightgray,linewidth=0.75pt](0,0)(0,1)(0.8,0.8)(1,0)
\pspolygon[fillstyle=solid,fillcolor=gray,linewidth=0.75pt](0,0)(0,1)(0.7,0.7)(1,0)
%\pspolygon[fillstyle=hlines,fillcolor=gray,hatchcolor=gray,linewidth=1.0pt](0,0)(0,1)(1,0)
%\psdots[linewidth=2pt](0,0)(0,1)(1,0)(1,1)
\cnode*(0,0){2.5pt}{x00}
\cnode*(1,0){2.5pt}{x10}
\cnode*(0,1){2.5pt}{x01}
\cnode*(1,1){2.5pt}{x11}
\psline[linewidth=1.5pt](0,1)(1,0) %\rput[r](0.25,0.70){$H$}
\rput[c](0.7,0.45){$P'$}
\rput[c](0.85,0.5){$P$}
\cnode*(0.5,0.5){2.5pt}{z} \nput[labelsep=0pt]{-135}{z}{$(\frac{1}{2},\ldots,\frac{1}{2})$}
\pnode(0.9,0.9){c}
\cnode*(0.8,0.8){2.5pt}{xStar} % \nput{0}{xStar}{$x^* = (\frac{1}{2}+\varepsilon,\ldots,\frac{1}{2}+ \varepsilon)$}
\cnode*(0.7,0.7){2.5pt}{xPrime} %\nput{0}{xPrime}{$x' = (\frac{1}{2}+\varepsilon',\ldots,\frac{1}{2}+ \varepsilon')$}
\pnode(1.1,0.9){xStarLabel} \ncline[arrowsize=5pt]{->}{xStarLabel}{xStar}  \nput[labelsep=3pt]{0}{xStarLabel}{$x^*(\varepsilon)$} % = (\frac{1}{2}+\varepsilon,\ldots,\frac{1}{2}+ \varepsilon)$}
\pnode(0.7,1.1){xPrimeLabel} \ncline[arrowsize=5pt]{->}{xPrimeLabel}{xPrime}  \nput[labelsep=2pt]{90}{xPrimeLabel}{$x^*(\varepsilon')$} % = (\frac{1}{2}+\varepsilon,\ldots,\frac{1}{2}+ \varepsilon)$}
\psline[linewidth=2pt](0,1.1)(0.7,0.7)(1.4,0.3) \rput[l](1.43,0.3){$\tilde{c}x \leq \lfloor\beta\rfloor$}
\psline[linewidth=2pt](0.1,1.2)(0.8,0.8)(1.5,0.4) \rput[l](1.53,0.4){$\tilde{c}x \leq \beta$}
\ncline[linestyle=dashed]{->}{z}{c} %\nbput[npos=1]{$(1,\ldots,1)$}
%\psline[linecolor=darkgray,linewidth=2pt](0.375,1.25)(0.5,1)(1,0)(1.125,-0.25) % c_1
%\rput[l](0.47,1.1){\darkgray{$c_1x=\max\{ c_1y \mid y \in P^{(k)}\}$}}
%\psline[linewidth=2pt,linecolor=blue](0.25,1.25)(0.4,1)(1,0)(1.15,-0.25) \rput[r](1,-0.1){$\blue{cx=\alpha}$} % cx=alpha
%\psline[linewidth=2pt,linecolor=red](0.32,1.25)(1.22,-0.25) \rput[l](1.2,-0.1){\red{$cx=\gamma$}} % cx=

%\cnode*(0.47,1){3pt}{xhat} \nput[labelsep=1pt]{-135}{xhat}{$\hat{x}$}
%\cnode*(1,0){3pt}{xI} \nput{-135}{xI}{$x_I$}
%\rput(
\end{pspicture}
}
\caption{(a) Polytope $P=P(c,\varepsilon)$ in $n=2$ dimensions and with $c=(1,1)$. (b) Visualization of the  Gomory Chvátal cut $\tilde{c}x \leq \beta$ 
for a critical vector $\tilde{c}$. 
%which is  ``most violated'' for $x^*$ . 
Note that $\max\{ \tilde{c}x \mid x \in P\} = \tilde{c}x^*(\varepsilon)$.\label{fig:PolytopeP}}
\end{center}
\end{figure}
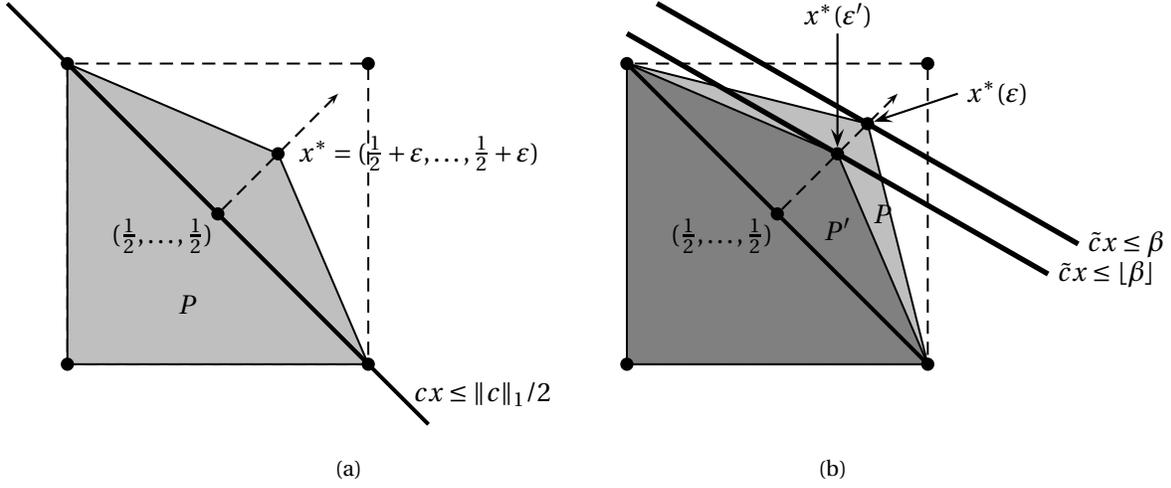
\item[(2)] \emph{The progress of the GC operator.} We will measure the progress of the Gomory Chvátal operator by observing how much
of the line segment between $\frac{1}{2} \mathbf{1}$ and $\frac{3}{4} \mathbf{1}$ has been
cut off. Consider a single Gomory Chvátal round and that Chvátal cut $\tilde{c}x \leq \lfloor\beta\rfloor$ that cuts off 
the longest piece from the line segment. In other words, $\tilde{c}x \leq \beta$ is valid for $P$, but $\tilde{c}x^* > \lfloor\beta\rfloor$.
Of course, a necessary condition on such a vector $\tilde{c}$ is that the objective 
function $\tilde{c}$ is maximized at $x^*$. Let us call any such a vector \emph{critical} (see Figure~\ref{fig:PolytopeP}.(b)).
Secondly, the point $x^*(\varepsilon') \in P'$ with maximum $\varepsilon'$ must have $\tilde{c}x^*(\varepsilon') = \lfloor\beta\rfloor$.
But that means
\[
  1 \geq \tilde{c}x^*(\varepsilon) - \tilde{c}x^*(\varepsilon') = \tilde{c}\mathbf{1} \cdot (\varepsilon - \varepsilon') = \|\tilde{c}\|_1 \cdot (\varepsilon - \varepsilon')
\]
and we can bound the progress of the Gomory Chvátal operator by $\varepsilon-\varepsilon' \leq \frac{1}{\|\tilde{c}\|_1}$.
In other words, in order to show a high rank, we need to prove that
all critical vectors must be long.
%It remains to show that all critical vectors must be long. 

We will later propose a choice of $c$ such that any critical vector $\tilde{c}$
has $\|\tilde{c}\|_1 \geq \Omega(\frac{n}{\varepsilon})$ (as long as  $\varepsilon \geq (\frac{1}{2})^{O(n)}$). 
This means that the number of GC iterations
until the current value of $\varepsilon$ reduces to $\varepsilon/2$ will be $\Omega(n)$; thus it will take
$\Omega(n^2)$ iterations until $\varepsilon = (1/2)^{\Theta(n)}$ is reached. 
\item[(3)] \emph{Critical vectors must be long.}
%But how can we design a vector $c$ such that critical vectors must be long?
Why should we expect that critical vectors must be long? Intuitively, if $\varepsilon$ is getting smaller, then $x^*$ is moving closer to the hyperplane defined by $c$
and the cone of objective functions that are optimal at $x^*$ becomes very narrow. As a consequence, the length of critical vectors
should increase as $\varepsilon$ decreases. 

Recall that we termed $\tilde{c} \in \setZ^n$ critical if and only if
\[
  \max\{ \tilde{c}x \mid x \in P_I \} \leq \tilde{c}x^*.
\]
One of our key lemmas is to show that under some mild conditions, the 
left hand side can be lowerbounded by $\frac{1}{2}\|\tilde{c}\|_1 + \Theta(\|\tilde{c} - \frac{c}{\lambda}\|_1)$,
where $\lambda > 0$ is some scalar. As we will see, an immediate consequence is that for a critical vector $\tilde{c}$ it is
a necessary condition that there is a $\lambda > 0$ with
\[
  \|\lambda\tilde{c} - c\|_1 \leq O( \varepsilon \|c\|_1 ).
\]
In other words, it is necessary that $\tilde{c}$, if suitably scaled, well approximates
the vector $c$. In fact, this problem is well studied under the name
\emph{simultaneous Diophantine approximation}. Thus, if we want to show that critical vectors must be long, 
 it suffices to find a 
vector $c$ that does not admit good approximations using short vectors $\tilde{c}$. 
The simple solution is
to pick $c$ \emph{at random} from a suitable range; then $\|\lambda\tilde{c} - c\|_1$ will 
be large with high probability for all $\lambda$ and all short $\tilde{c}$.
\end{enumerate}

\section{A general strategy to lower bound the Chvátal rank}

%Let us consider the following situation: 
%let $x^* := (\frac{1}{2} + \varepsilon,\ldots,\frac{1}{2} + \varepsilon)$ be a fractional point and
%\[
%P := P(c,\varepsilon) := \conv\{ \{ x \in \{ 0,1\}^n :  cx \leq \beta\} \cup \{ x^* \} \}
%\]
%be a monotone polytope, i.e.  $c \in \setN_+, \beta \in \setN$. 
%Furthermore we assume that the point  $(\frac{1}{2},\ldots,\frac{1}{2})$ lies on the boundary of $P$,
%thus $\beta = \frac{1}{2}\|c\|_1$.
%However, we do not make more assumptions for now; we call this a 
%\emph{generic example}. We have the hope that for a suitable choice of $c$, 
%the Chvátal rank of $P(c,\varepsilon)$ will be superlinear. 
%
%We call an inequality $\tilde{c}x \leq \beta$ with $\tilde{c} \in \setZ^n$  \emph{$x^*$-critical}, if
%\begin{itemize}
%\item $\tilde{c}x^* = \beta$
%\item $\tilde{c}x < \beta$ for all $x \in P_I$ %\{ 0,1\}^n$ with $\|x\|_1 \leq \frac{n}{2}$.
%\end{itemize}
%More general we call a normal vector  $\tilde{c} \in \setZ^n$ $x^*$-critical, if 
%there is a  $\beta \in \setR$ such that the inequality $\tilde{c}x \leq \beta$ is $x^*$-critical.
We focus now on the polytope $P := P(c,\varepsilon)$ defined above and properties of 
critical vectors. %Recall that we
%termed a vector $\tilde{c}$  \emph{critical} if and only if 
%$\tilde{c}x^* \geq \tilde{c}x$ for all $x \in P_I$.
%We already mentioned the implication  $x^* \notin GC_P(\tilde{c}) \Longrightarrow \tilde{c}$ is $x^*$-critical. 
%However, the reverse direction is not necessarily true --- namely if $\tilde{c}x^* \in \setZ$ is integral.
We want to define $L_c(\varepsilon)$ as the $\|\cdot\|_1$-length of the shortest vector, that is
$x^*(\varepsilon)$-critical. Formally, let
\begin{eqnarray*}
L_{c}(\varepsilon) &:=& \min_{\tilde{c} \in \setZ_{\geq0}^n}\Big\{ \|\tilde{c}\|_1 \mid \tilde{c}x^* \geq \max_{x \in P_I} \tilde{c}x\Big\} \\
&=& \min_{\tilde{c} \in \setZ_{\geq0}^n}\Big\{ \|\tilde{c}\|_1 \mid \|\tilde{c}\|_1 \cdot \left(\frac{1}{2} + \varepsilon\right) \geq \max_{x \in P_I} \tilde{c}x\Big\}
\end{eqnarray*}
By definition, the function $L$ is monotonically non-increasing in $\varepsilon$ and 
$L_c(0) \leq \|c\|_1$. %\footnote{OK, at least if we would replace $>$ in the $\min$ by $\geq$.}. 

For example, if $c = (1,\ldots,1)$, it is not difficult to
show that $L_c(\varepsilon) \geq \frac{n}{2}$ for all $0<\varepsilon<\frac{1}{2}$ (see Appendix~\ref{sec:CriticalVectorsForAllOnesVector}). In fact, for all $c$ and $\varepsilon$, one can show a general 
\emph{upper} bound of $L_c(\varepsilon) \leq \frac{n}{\varepsilon}$ (see Appendix \ref{sec:UpperBoundOnLcEpsilon}). Later we will see that for some
choice of $c$ 
this bound is essentially tight --- for a long range of $\varepsilon$, and 
this will be crucial to prove our result.  

Observe that, in the definition of $L_c(\varepsilon)$, we only admit non-negative entries for $\tilde{c}$.
But it is not difficult to prove that since $P$ is a \emph{monotone} polytope (that is, 
$x \in P$, $\mathbf{0} \leq y\leq x \Longrightarrow y \in P$), the 
shortest critical vectors will be non-negative. 

\begin{lemma} \label{lem:ThereIsAshorterTightVector}
Let $\tilde{c} \in \setZ^n$ be $x^*$-critical. 
Then also the vector $\tilde{c}^+ \in \setZ_{\geq 0}^n$ with $\tilde{c}^+_i := \max\{ \tilde{c}_i, 0\}$ is $x^*$-critical.
Moreover $\|\tilde{c}^+\|_1 \leq \|\tilde{c}\|_1$.
\end{lemma}
\begin{proof}
%For symmetry reasons assume  $c_1 < 0$. It suffices to show the claim for 
% $c^+ := c + e_1$.
%Choose $\beta$ such that $cx^* = \beta$. Moreover, $x^*$ is the only optimum
%for $\max\{ cx \mid x \in P\}$.
%Sei $\beta := \max\{ cx \mid x \in P \}\rfloor =  cx^* $ und nehmen wir an, $x^*$ war das einzige Optimum. 
%Then the inequality
%\[
%  \underbrace{(c+e_1)}_{=: \tilde{c}}x \leq \underbrace{\beta + x_1^*}_{=\tilde{\beta}}
%\]
%is still critical for $x^*$. It remains to prove that the inequality holds
%for all integral vertices. So, let $x \in \{ 0,1\}^n$ with $\|x\|_1 \leq \frac{n}{2}$. 
%If $x_1 = 0$, then $\tilde{c}x = cx$ and there is nothing to show. 
%Hence assume $x_1 = 1$. Then $\tilde{c}x = c(x-e_1)$ (since $(\tilde{c}-c)e_1 = 0$). 
%But the vector $x-e_1 \in \{ 0,1\}^n$ is also
%in $P$ and $(x-e_1)_1 = 0$. But we did already argue that the 
%inequality $\tilde{c}x \leq \tilde{\beta}$ is satisfied by such points.
One has
\[
  \tilde{c}^+x^* \geq \tilde{c}x^* \stackrel{\tilde{c}\textrm{ critical}}{\geq} \max\{ \tilde{c}x \mid x \in P_I \} = \max\{ \tilde{c}^+x \mid x \in P_I\},
\]
thus $\tilde{c}^+$ is critical. Here we used for the last equality that 
the optimum solutions for both expressions $\max\{ \tilde{c}x \mid x \in P_I \}$ 
and $\max\{ \tilde{c}^+x \mid x \in P_I \}$ would w.l.o.g. have $x_i=0$ whenever $\tilde{c}_i<0$ (using the monotonicity of $P_I$). 
\end{proof}

%First of all, we can provide a 
%general upper bound on $L$ that does not even depend on the choice of $c$
%(of course for a \emph{lower} on the Chvátal rank, we also need a \emph{lower} bound
%on $L(\varepsilon)$, but let's have a look at upper bounds to see what we can expect).

%On the other hand, the Chvátal rank can be bounded in terms of $L_c$. In fact, we consider the progress on points 
%on the line segment $(\frac{1}{2} + \varepsilon,\ldots,\frac{1}{2} + \varepsilon)$ with $0<\varepsilon<\frac{1}{2}$. 
How does the length of critical vectors relate to the Chvátal rank? 
The next lemma answers this question. In fact, one iteration of the Gomory Chvátal closure, reduces $\varepsilon$ by essentially $\frac{1}{L_c(\varepsilon)}$. %\rem{@L: Thomas, this proof is really difficult to follow}
%\begin{lemma} \label{lem:LowerBoundOnRkDependingOnL}
%Let $P := P(c,\frac{1}{4})$. Then we can lower bound the Chvátal rank by
%\[
%   \rk(P) \geq \sum_{\ell\geq 2} \big\lfloor (1/2)^{\ell+1} \cdot L_{c}( (1/2)^{\ell} ) \big\rfloor
%\]
%\end{lemma}

\begin{lemma}  \label{lem:LowerBoundOnRkDependingOnL}
Suppose $L_c(\varepsilon) \geq \frac{\gamma}{\varepsilon}$ for all $\delta_1 \leq \varepsilon \leq \delta_0$ (with  $\gamma \geq 2$).
Then $\rk(P(c,\delta_0)) \geq \frac{\gamma}{2} \cdot \ln(\frac{\delta_0}{\delta_1})$.
\end{lemma}
\begin{proof}
Abbreviate  $P := P(\delta_0,c)$. To measure the progress of the Chvátal operator, 
consider $\varepsilon_i := \max\{ \varepsilon : x^*(\varepsilon) \in P^{(i)} \}$. Let $k$ be the index such that $\delta_0 = \varepsilon_0 \geq \varepsilon_1 \geq \ldots \geq \varepsilon_{k-1} \geq \delta_1 > \varepsilon_k$.
Clearly $\rk(P) \geq k$.

Consider a fixed $i \in \{ 0,\ldots,k-1\}$. We want to argue that the difference between consecutive $\varepsilon_i$'s is very small,
i.e. $\frac{\varepsilon_{i+1}}{\varepsilon_i} \geq 1 - \frac{1}{\gamma}$. So assume that $\varepsilon_i > \varepsilon_{i+1}$, otherwise there is nothing to show.
Let $\tilde{c}_ix \leq \lfloor\beta_i\rfloor$ be the Gomory Chvátal cutting plane that cuts furthest w.r.t. 
the line segment defined by $x^*(\varepsilon)$. In other words  $\tilde{c}_ix \leq \beta_i$ is feasible for $P^{(i)}$
with $\tilde{c}_i \in \setZ^n$ and $\tilde{c}_ix^*(\varepsilon_{i+1}) = \lfloor\beta_i\rfloor$ (similar to Figure~\ref{fig:PolytopeP}.(b)).
Since $\varepsilon_{i}>\varepsilon_{i+1}$, we have $\tilde{c}_i x^*(\varepsilon_i) > \lfloor\beta_i\rfloor$. Combining this with
the fact that $P(c,\varepsilon_i) \subseteq P^{(i)}$, we know that %, the inequality $\tilde{c}^ix \leq \beta^i$ must also be a Gomory Chvatal cut  is feasible for $P(c,\varepsilon_i)$ as well.
%Moreover, $\max\{ \tilde{c}^ix \mid x \in P(c,\varepsilon_i)\}$ has to be maximized at $x^*(\varepsilon_i)$ (otherwise, it would 
%be maximized at a point in $P(c,\varepsilon_i) \cap \{ 0,1\}^n$ and $\varepsilon_i=\varepsilon_{i+1}$). 
 $\tilde{c}_i$ is critical w.r.t. $x^*(\varepsilon_i)$ and by assumption $\|\tilde{c}_i\|_1 \geq L_c(\varepsilon_i) \geq \frac{\gamma}{\varepsilon_i}$.
Writing down what we obtained, we see that
\[
 1 \geq \underbrace{\tilde{c}_ix^*(\varepsilon_i)}_{\leq\beta_i} - \underbrace{\tilde{c}_ix^*(\varepsilon_{i+1})}_{=\lfloor\beta_i\rfloor} = \tilde{c}_i \cdot \mathbf{1} \cdot (\varepsilon_{i} - \varepsilon_{i+1}) = \|\tilde{c}_i\|_1 \cdot (\varepsilon_{i}-\varepsilon_{i+1}) 
%\geq L_c(\varepsilon_{i}) \cdot (\varepsilon_{i} - \varepsilon_{i+1}) 
\geq \frac{\gamma}{\varepsilon_i} \cdot (\varepsilon_{i} - \varepsilon_{i+1})
\]
which can be rearranged to $\frac{\varepsilon_{i+1}}{\varepsilon_i} \geq 1- \frac{1}{\gamma}$ as claimed. Finally,
\[
 \delta_1 > \varepsilon_k = \delta_0 \cdot \prod_{i=0}^{k-1} \frac{\varepsilon_{i+1}}{\varepsilon_i} \geq \delta_0\cdot \left(1- \frac{1}{\gamma}\right)^{k} \geq \delta_0 \cdot e^{-2k/\gamma}
\]
using that $1-x\geq e^{-2x}$ for $0\leq x\leq \frac{1}{2}$. Rearranging yields  $k \geq \frac{\gamma}{2} \ln(\frac{\delta_0}{\delta_1})$.
\end{proof}

\section{Constructing a good Knapsack solution}
In order to provide a lower bound on $L_c(\varepsilon)$,
we  inspect the knapsack problem $\max\{ \tilde{c}x \mid x \in P_I\}$
for a critical vector $\tilde c$.
%In the definition of the term ``critical'', we need to evaluate 
%$\max\{ \tilde{c}x \mid x \in P_I\}$. 
The crucial ingredient for our proof is to find a fairly tight lower bound on this quantity.

%More general a multiset containing $B$ at least $k$ times and 
%each of the numbers $B+2^0,B+2^1,B+2^2,\ldots,B+2^k$ once is an additive basis 
%of size $2k+1$ for the interval $\{k\cdot B,\ldots,k\cdot B+2^k\}$.

In the following key lemma (Lemma~\ref{lem:LowerBoundOnMaxCtildeOverPI}), we are going 
to show that (under some conditions on $c$) we can derive the lower bound:
%Before we formally state and prove the following key lemma
$\max\left\{ \tilde{c}x \mid x \in P_I \right\} \geq \frac{1}{2}\|\tilde{c}\|_1 + \Omega\big( \left\|\tilde{c} - \frac{c}{\lambda}\right\|_1\big)$ for some $\lambda > 0$. Intuitively 
the vector $x=(\frac{1}{2}, \dots, \frac{1}{2})$
%all-$\frac{1}{2}$-vector 
is already a (fractional) solution to the above knapsack problem of value $\|\tilde{c}\|_1/2$, 
but if $c$ and $\tilde{c}$ have a large angle, than one actually improve over that 
solution; in fact one can improve by the ``difference'' $\|\tilde{c} - \frac{c}{\lambda}\|_1$. 
Before the formal proof, let us describe, how to derive this lower bound
in an ideal world that's free of technicalities. 

Sort the items by their profit over cost ratio so that 
 $\frac{\tilde{c}_1}{c_1} \geq \ldots \geq \frac{\tilde{c}_n}{c_n}$. 
Since we are dealing with a knapsack problem, we start taking
the items with the best ratio into our solution. Suppose for
the sake of simplicity that we are lucky and the  $k$ 
items with largest ratio fit perfectly into the knapsack, i.e. $\sum_{i=1}^k c_i = \|c\|_1/2$. 
Then $J := [k]$ must actually be an \emph{optimum} knapsack solution.
Next, choose $\lambda >0$ such that $\frac{1}{\lambda}$ is the profit threshold, i.e. 
 $\frac{\tilde{c}_1}{c_1} \geq \ldots \geq \frac{\tilde{c}_k}{c_k} \geq \frac{1}{\lambda} \geq \frac{\tilde{c}_{k+1}}{c_{k+1}} \geq \ldots \geq	 \frac{\tilde{c}_n}{c_n}$.
Using that $\sum_{i \in J} c_i = \sum_{i \notin J} c_i$, we can express the profit of our solution as
\[
\sum_{i \in J} \tilde{c}_i = 
% \frac{1}{2}\|\tilde{c}\|_1 + \frac{1}{2}\sum_{i \in J} \tilde{c}_i - \frac{1}{2}\sum_{i \notin J} \tilde{c}_i \\
%&\stackrel{\sum_{i \in J} c_i = \sum_{i \notin J} c_i}{=}& 
\frac{1}{2}\|\tilde{c}\|_1 + \frac{1}{2}\sum_{i \in J} \underbrace{\left(\tilde{c}_i-\frac{c_i}{\lambda}\right)}_{\geq 0} - \frac{1}{2}\sum_{i \notin J} \underbrace{\left(\tilde{c}_i-\frac{c_i}{\lambda}\right)}_{\leq 0} 
=  \frac{1}{2} \|\tilde{c}\|_1 + \frac{1}{2}\left\|\tilde{c} - \frac{c}{\lambda}\right\|_1
\]
proving the claimed lower bound on $\max\left\{ \tilde{c}x \mid x \in P_I \right\}$. In a non-ideal world, the greedily obtained solution
would not perfectly fill the knapsack, i.e. $\sum_{i=1}^k c_i < \|c\|_1/2$. 
To fill this gap, we rely on the concept of \emph{additive basis}.
%But using the concept of an \emph{additive basis} 
%we can fill the gap. 

\begin{definition}
Let $I = [a,b] \cap \setZ_{\geq0}$ be an interval of integers. We call subset $B \subseteq \setZ_{\geq0}$  
an additive basis for $I$ if for every $k \in I$, there are numbers $S \subseteq B$
such that
\[
  \sum_{s \in S} s = k.
\]
\end{definition}
In other words, we can express every number in $I$ as a sum of numbers in $B$.
For example $\{2^0,2^1,2^2,\ldots,2^k\}$ is an additive basis for $\{0,\ldots,2^0+2^1 + \ldots + 2^k\}$.
%\rem{@L: being precise, shouldn't we specify  that additive basis should contain few elements? T: Do you have an Italian expression for extremely picky people? OK, I changed it.}
The geometric consequence for a knapsack polytope $Q = \{ x \in \setR_{\geq0}^n \mid cx \leq \|c\|_1/2 \}$
is the following: 
if $c_1,\ldots,c_n$ are integral numbers that contain an additive basis (with at most $n/2$ elements) for $\{ 0,\ldots,\|c\|_{\infty}\}$
and, let's say $\|c\|_{\infty} \leq O(\frac{\|c\|_1}{n})$, then the face $cx = \|c\|_1/2$ contains
$2^{\Omega(n)}$ many $0/1$ points. The reason for this fact is that we can extend any
subset of items $I \subseteq [n]$ that does not exceed the capacity and that does not
contain any basis element, to a solution that fully fills the knapsack.
In the following, we abbreviate as usual $c(J) := \sum_{i \in J} c_i$.

%By a slight abuse of notation, we call an \emph{index set} $B \subseteq [n]$
%an additive basis for an interval $I$, if $\{ c_i \mid i \in B\}$ is an
%additive basis for $I$. 
%\[
%\max\left\{ \tilde{c}x \mid x \in \{ 0,1\}^n; \; cx \leq \frac{\|c\|_1}{2}\right\} \geq \frac{1}{2}\|\tilde{c}\|_1 + \Omega\left(\left\|\tilde{c} - \frac{c}{\lambda}\right\|_1\right)
%\]
%In fact, we will provide a solution $\tilde{x}$ wiht $c\tilde{x} = \frac{1}{2}\|c\|_1$
%Let us imagine 

\begin{lemma} \label{lem:LowerBoundOnMaxCtildeOverPI}
Let $c \in \setZ_{>0}^n$, $\tilde{c} \in \setR_{>0}^n$ and 3 disjoint index sets $B_1,B_2,B_3 \subseteq [n]$ 
such that each set $\{ c_i \mid i \in B_{\ell}\}$ is an additive basis for the interval  $I = \{ 0,\ldots,\|c\|_{\infty}\}$
with $\|c\|_{\infty} \leq \delta\|c\|_1$ and $c(B_{\ell}) \leq \delta\cdot\|c\|_1$ for all $\ell=1,2,3$ with $\delta := \frac{1}{100}$. Then there is a scalar $\lambda := \lambda(c,\tilde{c}) > 0$ such that
\[
  \max\left\{ \tilde{c}x \mid x \in \{ 0,1\}^n; \; cx \leq \frac{\|c\|_1}{2}\right\} \geq \frac{1}{2}\|\tilde{c}\|_1 + \frac{1}{16}\cdot \left\|\tilde{c} - \frac{c}{\lambda}\right\|_1
\]
\end{lemma}
\begin{proof}
Since we allow $\tilde{c}_i \in \setR$, there lies no harm in perturbing the coefficients slightly such that 
the profit/cost ratios $\frac{\tilde{c}_i}{c_i}$ are pairwise distinct. 
We sort the indices such that $\frac{\tilde{c}_1}{c_1} > \ldots > \frac{\tilde{c}_n}{c_n}$. 
Choose $\lambda>0$ such that 
\[
\sum_{i:\tilde{c}_i/c_i > 1/\lambda} c_i \in \left[\frac{\|c\|_1}{2}-\|c\|_{\infty},\frac{\|c\|_1}{2}\right]
\]
and there is no $i$ with $\frac{\tilde{c}_i}{c_i} = \frac{1}{\lambda}$
 % = \sum_{\tilde{c}_i/c_i < 1/\lambda} c_i = \frac{\|c\|_1}{2}$ is balanced 
(recall that $\frac{\tilde{c}_i}{c_i} > \frac{1}{\lambda} \Leftrightarrow \lambda\tilde{c}_i-c_i>0$). In other words, $\frac{1}{\lambda}$ is a \emph{profit threshold}
and ideally we would like to construct a solution for our knapsack problem by selecting the items above the threshold.
Let $q \in \{ 1,\ldots,n\}$ be the number such that $\frac{\tilde{c}_i}{c_i} > \frac{1}{\lambda} \Leftrightarrow i\leq q$, i.e. 
 $\frac{\tilde{c}_1}{c_1} > \ldots > \frac{\tilde{c}_q}{c_q} > \frac{1}{\lambda} > \frac{\tilde{c}_{q+1}}{c_{q+1}} > \ldots > \frac{\tilde{c}_n}{c_n}$.
For every item $i$ we define the \emph{relative profit} $w_i := \tilde{c}_i - \frac{c_i}{\lambda}$. 
%as the value by which the profit of item $i$ exceeds the threshold. 
Note that  $\frac{w_i}{c_i} = \frac{\tilde{c}_i}{c_i} - \frac{1}{\lambda}$ and $w_i>0 \Leftrightarrow i\leq q$, but the values $w_i$ are not necessarily monotonically decreasing.
The way how we defined $w$ yields $\|w\|_1 = \|\tilde{c} - \frac{c}{\lambda}\|_1$. %\footnote{Note that in general we have $\sum_{i: w_i > 0} w_i \neq \sum_{i: w_i < 0} (-w_i)$.} % = \frac{1}{2}\|w\|_1 = \frac{1}{2}\|\tilde{c} - \frac{1}{\lambda}\|_1$.
%Let $\tilde{x} := \mathbf{0}$ with $J := \emptyset$ be the knapsack solution that we are going to construct. 
Since the $B_{\ell}$'s are disjoint, one has $\sum_{\ell = 1}^3 \sum_{i \in B_{\ell}} |w_i| \leq \| w\|_1$. Thus we can pick 
one of the sets $B := B_{\ell}$ such that $\sum_{i \in B} |w_i| \leq \frac{1}{3} \| w\|_1$.

We are now going to construct a knapsack solution that fully fills the knapsack.
Let $k \in [n]$ maximal be such that 
\[
   \sum_{i \in \{ 1,\ldots,k\} \backslash B} c_i \leq \frac{\|c\|_1}{2} 
\]
In other words, if we take items $\{1,\ldots,k\} \backslash B$ into our knapsack, we have capacity at most $\|c\|_{\infty}$ left.
Next, construct an arbitrary solution $J' \subseteq B$ that perfectly fills the
remaining capacity, i.e. 
%\[
%  \sum_{i \in J'} c_i = \frac{\|c\|_1}{2} - \sum_{i \in \{ 1,\ldots,k\} \backslash B} c_i \; \in [a,a+\|c\|_{\infty}]
%\]
for $J := (\{ 1,\ldots,k\} \backslash B) \cup J'$ we have $c(J) = \frac{\|c\|_1}{2}$.
%be the total set of items that we decided to pick. 
Observe that 
\begin{equation}  \label{eq:ckUpperBound}
c([k]) \leq \underbrace{c([k]\backslash B)}_{\leq \|c\|_1/2} + \underbrace{c(B)}_{\leq \delta\|c\|_1} \leq \left(\frac{1}{2} + \delta\right)\cdot \|c\|_1.
\end{equation}
Moreover,
\begin{equation}  \label{eq:ckLowerBound}
  c([k]) \geq c([k]\backslash B) \geq \frac{\|c\|_1}{2} - \|c\|_{\infty} \geq \left(\frac{1}{2} - \delta\right)\cdot \|c\|_1  
\end{equation}
%\end{proof}

%\rem{@L: thomas, in the picture the arrow $\in J$ is pointing a wrong rectangle}
%\section{Test}
\begin{figure}
\begin{center}
\psset{xunit=0.7cm,yunit=0.3cm}
\begin{pspicture}(2,-2)(20,17)
\psaxes[arrowsize=6pt,labels=none,ticks=none]{->}(0,0)(0,0)(22.7,15.5) \rput[r](-10pt,15){$\frac{\tilde{c}_i}{c_i}$}
\drawRect{fillstyle=solid,fillcolor=lightgray}{0}{0}{2}{14.5}
\drawRect{fillstyle=vlines,dashcolor=gray}{2}{0}{1}{14}
\drawRect{fillstyle=solid,fillcolor=lightgray}{3}{0}{1}{13}
\drawRect{fillstyle=vlines,dashcolor=gray}{4}{0}{2}{12}
\drawRect{fillstyle=solid,fillcolor=lightgray}{6}{0}{1}{11}
\drawRect{fillstyle=solid,fillcolor=lightgray}{7}{0}{2}{10}
\drawRect{fillstyle=solid,fillcolor=lightgray}{9}{0}{1}{9}
\drawRect{fillstyle=solid,fillcolor=lightgray}{10}{0}{2}{8}
\drawRect{}{12}{0}{1}{7}
\drawRect{}{13}{0}{2}{6}
\drawRect{}{15}{0}{1}{5}
\drawRect{fillstyle=none}{16}{0}{1}{4}
\drawRect{}{17}{0}{3}{3}
\drawRect{fillstyle=vlines*,dashcolor=gray,fillcolor=lightgray}{20}{0}{2}{2}
%\cnode*(0,0){3pt}{origin}
\psline(22,0)(22,-8pt) \rput[c](22,-18pt){$\|c\|_1$}
\psline(11,0)(11,-8pt) \rput[c](11,-18pt){$\frac{1}{2}\|c\|_1$}
\psline[linestyle=dashed](-5pt,8.5)(22,8.5) \rput[r](-10pt,8.5){$\frac{1}{\lambda}$}
\pnode(0,-2){LC1} \pnode(2,-2){RC1} \ncline[arrowsize=6pt]{|<->|}{LC1}{RC1} \nbput{$c_1$} 
\pnode(2,-2){LC2} \pnode(3,-2){RC2} \ncline[arrowsize=6pt]{|<->|}{LC2}{RC2} \nbput{$c_2$} 
%\pnode(9,-2){LA} \pnode(11,-2){RA} \ncline[arrowsize=6pt]{|<->|}{LA}{RA} \nbput{$a$} 
%\pnode(7,-3){LAINF} \pnode(11,-3){RAINF} \ncline[arrowsize=6pt]{|<->|}{LAINF}{RAINF} \nbput{$a+\|c\|_{\infty}$} 
\pnode(2.5,13.5){LB1} \pnode(3.5,15){LB2} \ncline[arrowsize=6pt]{->}{LB2}{LB1} \nput[labelsep=2pt]{0}{LB2}{$\in B$}
\pnode(6.5,10.5){LJ1} \pnode(7,12){LJ2} \ncline[arrowsize=6pt]{->}{LJ2}{LJ1} \nput{90}{LJ2}{$\in J$}
\pnode(21,1.5){LBJ1} \pnode(21,3){LBJ2} \ncline[arrowsize=6pt]{->}{LBJ2}{LBJ1} \nput{90}{LBJ2}{$\in B\cap J$}
\pnode(11,7.5){LitemK1} \pnode(12,9){LitemK2} \ncline[arrowsize=6pt]{->}{LitemK2}{LitemK1} \nput{90}{LitemK2}{item $k$}
\pnode(9.5,8.75){LitemQ1} \pnode(10,10){LitemQ2} \ncline[arrowsize=6pt]{->}{LitemQ2}{LitemQ1} \nput{90}{LitemQ2}{item $q$}
\pnode(0,16){LK1} \pnode(2,16){LK2} \pnode(3,16){LK3} \pnode(4,16){LK4} \pnode(6,16){LK5} \pnode(12,16){LK6} \ncline[arrowsize=6pt]{|<-}{LK1}{LK2} \ncline[arrowsize=6pt,linestyle=dotted]{-}{LK2}{LK3} \ncline[arrowsize=6pt]{-}{LK3}{LK4} \ncline[arrowsize=6pt,linestyle=dotted]{-}{LK4}{LK5} \ncline[arrowsize=6pt]{->|}{LK5}{LK6}  \naput{$c([k]\backslash B)\in\big[\frac{\|c\|_1}{2} - \|c\|_{\infty}, \frac{\|c\|_1}{2}\big] \hspace{2cm}$}
\psline[linewidth=1.5pt](6,13)(6,-8pt) \rput[c](6,-15pt){$(\frac{1}{2}-\delta)\|c\|_1$}
\psline[linewidth=1.5pt](16,13)(16,-8pt) \rput[c](16,-15pt){$(\frac{1}{2}+\delta)\|c\|_1$}
\end{pspicture}
\caption{Visualization of the construction of $J$: Take items with best profit/cost ratio (skipping items in the basis $B$) as long as possible. Then fill the remaining gap with arbitrary items from $B$. \label{fig:ConstructionOfJ}}
\end{center}
\end{figure}
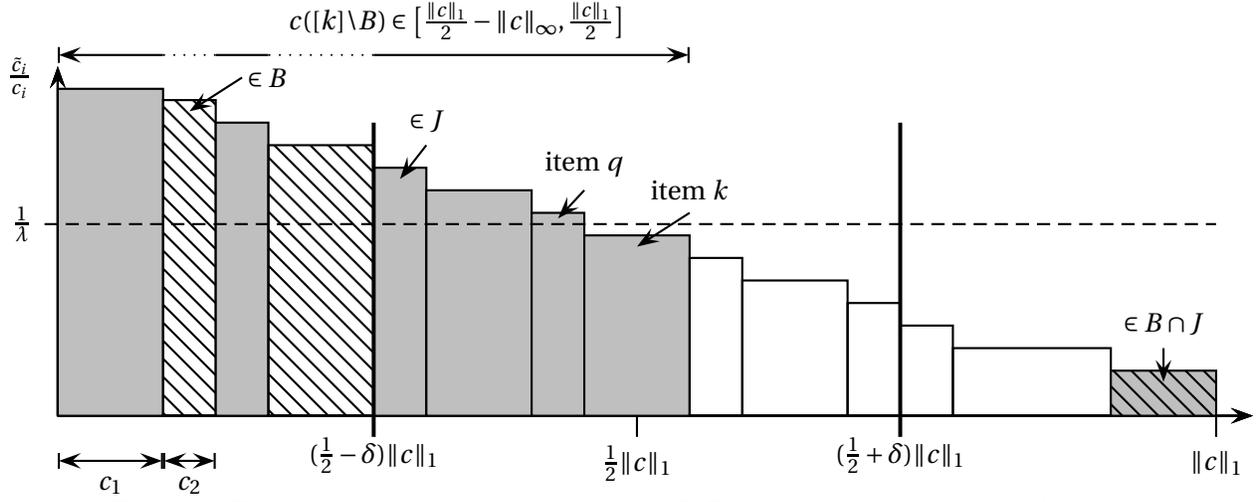

%\begin{proof}
We call an item $i$ \emph{central} if $(\frac{1}{2}-\delta)\|c\|_1 \leq c([i]) \leq (\frac{1}{2}+\delta)$.
We cannot be sure appriori whether central items are selected into $J$ or not. However, we can 
prove that due to the sorting they have a small $|w_i|$-value
anyway.
Let us abbreviate $W^+ := \sum_{i \leq q} w_i$ and $W^- := \sum_{i>q} |w_i|$ (so that $\|w\|_1 = W^+ + W^-$).

\begin{claim} \label{claim:WeightOfCentralWindow}
$\sum_{i : \; (\frac{1}{2}-\delta)\|c\|_1 \leq c([i]) \leq (\frac{1}{2}+\delta)\|c\|_1} |w_i| \leq 9\delta \|w\|_1.$
\end{claim}
\begin{proofofclaim}
We abbreviate $I_+ := \{ i \mid w_i > 0\}$ and $I_- := \{ i \mid w_i < 0\}$. Furthermore 
$I_+^{\delta} := \{ i \in I_+ \mid c([i]) \geq (\frac{1}{2}-\delta)\|c\|_1 \}$ and $I_-^{\delta} := \{ i \in I_- \mid c([i]) \leq (\frac{1}{2}+\delta)\|c\|_1 \}$. 
Note that $c(I_+),c(I_-) \geq \frac{1}{2}\|c\|_1-2\|c\|_{\infty} \geq \frac{1}{3}\|c\|_1$ (since $\|c\|_{1} \geq 6\|c\|_{\infty}$)
and $c(I_+^{\delta}),c(I_-^{\delta}) \leq \delta\|c\|_1 + 2\|c\|_{\infty} \leq 3\delta\|c\|_1$ (since $\|c\|_{\infty} \leq \delta\|c\|_1$).
%Note that for all $i\leq q$, $\frac{\tilde{c}_i}{c_i} - \frac{1}{\lambda}$ is a decreasing sequence.

%For the purpose of accounting, imagine we pick an index $i^* \in \{ 1,\ldots,n\}$ at random, such that $\Pr[i^* = i] = \frac{c_i}{\|c\|_1}$. 
Recall that the items are sorted such that the values $\frac{w_i}{c_i} = \frac{\tilde{c}_i}{c_i} - \frac{1}{\lambda}$
decrease and $I_+^{\delta}$ is a set of maximal indices within $I_+$, thus the average of $\frac{w_i}{c_i}$
over items in $I_+^{\delta}$ cannot be higher than the average over $I_+$. Formally
$\frac{w(I_{+}^{\delta})}{c(I_{+}^{\delta})} \leq  \frac{W^+}{c(I_+)}$,
%\[
%\frac{w(I_{+}^{\delta})}{c(I_{+}^{\delta})}  = \E\Big[ \frac{w_{i^*}}{c_{i^*}} \mid i^* \in I_{+}^{\delta}\Big] \leq \E\Big[ \frac{w_{i^*}}{c_{i^*}} \mid i^* \in I_{+}\Big] = \sum_{i \in I_+} \frac{c_i}{c(I_+)}\cdot\frac{w_{i}}{c_{i}}
% = \frac{W^+}{c(I_+)}
%\]
thus 
\begin{equation} \label{eq:WIplusDeltaBound}
w(I_{+}^{\delta}) \leq \frac{c(I_{+}^{\delta})}{c(I_+)} \cdot W^+ \leq \frac{3\delta \|c\|_1}{ \|c\|_1/3} W^+ = 9\delta \cdot W^+.
%\leq \frac{\delta\|c\|_1}{\|c\|_1/2 - \|c\|_\infty} \cdot W^+ \stackrel{\|c\|_1 \geq 6\|c\|_{\infty}}{\leq} 3\delta\cdot W^+.
\end{equation}
 Analogously
$  \frac{\sum_{i \in I_{-}^{\delta}} |w_i|}{c(I_-^{\delta})} %= \E\Big[ \frac{|w_{i^*}|}{c_{i^*}} \mid i^* \in I_{-}^{\delta}\Big]
%\leq \E\Big[ \frac{|w_{i^*}|}{c_{i^*}} \mid i^* \in I_{-}\Big] = 
\leq \frac{W^-}{c(I_-)}$,
and hence
\begin{equation} \label{eq:WIminusDeltaBound}
  \sum_{i \in I_{-}^{\delta}} |w_i| \leq  \frac{c(I_-^{\delta})}{c(I_-)} \cdot W^- \leq \frac{3\delta\|c\|_1}{\|c\|_1/3}\cdot W^- \leq 9\delta \cdot W^-.
\end{equation}
Adding up \eqref{eq:WIplusDeltaBound} and \eqref{eq:WIminusDeltaBound} yields the claim $\sum_{i \in I_{+}^{\delta} \cup I_{-}^{\delta}} |w_i| \leq 9\delta \cdot (W^+ + W^-) = 9\delta \|w\|_1$.
%\[
% \sum_{i \in I_{-}^{\delta}} |w_i| = \sum_{i \in I_-^{\delta}} \frac{c_i}{c(I_-)} 
%\]
%But the ratios $c_i/c_{i'}$ are between $\frac{1}{2}$ and $2$, thus
%$E[w_{i^*} \mid i^* \in I_{+}^{\delta}] \leq 2 \cdot E[w_{i^*} \mid i \in I_+] = \sum \frac{c_i}{c(I_+)} \cdot w_{i}$
%we have at least
%\[
%  E[ w_{i^*} \mid (\frac{1}{2}-\delta)\|c\|_1 \leq c([i^*]) \leq \frac{1}{2}\|c\|_1] \leq 2\cdot E[ w_{i^*} \mid i^* \leq q]
%\]
%We conclude that
%\[
%  \sum_{i: \; (\frac{1}{2}-\delta)\|c\|_1 \leq c([i]) \leq \frac{1}{2}\|c\|_1} w_i \leq 2 \cdot \frac{\delta \|c\|_1}{\|c\|_1/2} \cdot W^+ = 4\delta \cdot W^+
%\]
%Analogously 
%\[
%  \sum_{i: \; (\frac{1}{2}\|c\|_1 \leq c([i]) \leq (\frac{1}{2}+\delta)\|c\|_1} |w_i| \leq 2 \cdot \frac{\delta \|c\|_1}{\|c\|_1/2} \cdot W^- = 4\delta \cdot W^-
%\]
%Thus
%\[
%  \sum_{(\frac{1}{2}-\delta)\|c\|_1 \leq c([i]) \leq (\frac{1}{2}+\delta)\|c\|_1} |w_i| \leq 4\delta (W^+ + W^-) = 4\delta \cdot \|w\|_1
%\]
\end{proofofclaim}

\begin{claim} \label{claim:wBound}
$w(J) - w([n] \backslash J) \geq \frac{1}{8}\cdot \|w\|_1$.
\end{claim}
\begin{proofofclaim}
We call an index $i \in [n]$ \emph{correct}, if $i \in J \Leftrightarrow w_i>0$.
In other words, indices with $w_i>0$ that are in $J$ are correct and indices
with $w_i<0$ and $i \notin J$ are correct -- all other indices are \emph{incorrect}.
A correct index $i$ contributes $+|w_i|$ to the sum $w(J) - w([n] \backslash J)$ and an
incorrect index contributes $-|w_i|$. Thus if all indices would be correct, we would 
have $w(J) - w([n] \backslash J) = \|w\|_1$. From this amount, we want to deduct
contributions for incorrect indices. An index can either be incorrect
if it is in $B$ (for those we have $\sum_{i \in B} |w_i| \leq \frac{1}{3}\|w\|_1$) or if
it lies in the central window, i.e. $c([i]) \in (\frac{1}{2} ± \delta)\|c\|_1$
(for those items we have $\sum_{i : \; (\frac{1}{2}-\delta)\|c\|_1 \leq c([i]) \leq (\frac{1}{2}+\delta)\|c\|_1} |w_i| \leq 9\delta \|w\|_1$ according to Claim~\ref{claim:WeightOfCentralWindow}).
%First, let us discuss, which items may or may not end up in our solution $J$.
%\begin{itemize*}
%\item We take items $i$ with $c([i]) \leq (\frac{1}{2}-\delta)\|c\|_1$ as long as they are not in $B$ (see  \eqref{eq:ckLowerBound}).
%\item We may or may not take items in $B$ (recall that the relative profit of such items is
% $\sum_{i \in B} |w_i| \leq \frac{1}{4}\|w\|_1$).
%\item We may or may not take items $i$ with $c([i]) \in (\frac{1}{2} ± \delta)\|c\|_1$.
%\item We do not take items $i$ with $c([i]) > (\frac{1}{2}+\delta)\|c\|_1$ with $i\notin B$ (see \eqref{eq:ckUpperBound}).
%\end{itemize*}
%Thus we have
%\begin{equation} \label{eq:wJbound}
%w(J) = \sum_{i\in[k] \backslash (B \backslash J') } |w_i| 
%\geq \sum_{i\leq q} |w_i| - \sum_{i: \; c([i]) \in (\frac{1}{2}± \delta)\|c\|_1} |w_i|
%- \sum_{i \in B}|w_i| \geq W^+ - 3\delta \|w\|_1 - \frac{1}{3}\|w\|_1
%\end{equation}
%Furthermore
%\begin{equation} \label{eq:wNwithoutJbound}
%w([n] \backslash J) \leq -\sum_{i>q} |w_i| + \sum_{i: \;  c([i]) \in (\frac{1}{2}± \delta) \|c\|_1} |w_i|
%+ \sum_{i \in B}|w_i| \leq -W^- + 3\delta \|w\|_1 + \frac{1}{3}\|w\|_1
%\end{equation}
%Combining \eqref{eq:wJbound} and \eqref{eq:wNwithoutJbound} provides
Subtracting these quantities, we obtain
\begin{eqnarray}
  \sum_{i \in J} w_i - \sum_{i \notin J} w_i
&\geq& \left(1-2\cdot9\delta - 2\cdot\frac{1}{3}\right)\cdot \|w\|_1 \geq \frac{1}{8}\|w\|_1 \label{eq:generalWBound}
\end{eqnarray}
for $\delta = \frac{1}{100}$.
\end{proofofclaim}

Finally, we note that the vector $\tilde{x} \in \{ 0,1\}^n$ with $\tilde{x}_i := 1$ if $i \in J$ and $0$ otherwise, 
satisfies the claim.
\begin{eqnarray*}
  \tilde{c}\tilde{x} &=& \frac{1}{2}\|\tilde{c}\|_1 + \frac{1}{2}\sum_{i \in J} \tilde{c}_i - \frac{1}{2}\sum_{i \notin J} \tilde{c}_i \\
&\stackrel{\sum_{i \in J} c_i = \sum_{i \notin J} c_i}{=}& \frac{1}{2}\|\tilde{c}\|_1 + \frac{1}{2}\sum_{i \in J} \left(\tilde{c}_i-\frac{c_i}{\lambda}\right) - \frac{1}{2}\sum_{i \notin J} \left(\tilde{c}_i-\frac{c_i}{\lambda}\right) \\
&=& \frac{1}{2} \|\tilde{c}\|_1 + \frac{1}{2} \sum_{i \in J} w_i - \frac{1}{2} \sum_{i \notin J} w_i \\
&\stackrel{\textrm{Claim~\eqref{claim:wBound}}}{\geq}&  \frac{1}{2} \|\tilde{c}\|_1 + \frac{1}{16}\|w\|_1 \\
&=&  \frac{1}{2} \|\tilde{c}\|_1 + \frac{1}{16}\Big\|\tilde{c} - \frac{c}{\lambda}\Big\|_1
\end{eqnarray*}
Here we use that $\sum_{i \in J} c_i = \sum_{i \notin J} c_i$.
\end{proof}

Now, we can get a very handy necessary condition on critical vectors.
Namely, if the conditions on $c$ (see Lemma~\ref{lem:LowerBoundOnMaxCtildeOverPI})
are satisfied, then any critical vector must have $\|\lambda\tilde{c} - c\|_1 \leq O(\varepsilon)\cdot\|c\|_1$.
To prove that critical vectors must be long, 
it remains to find a vector $c$ such that $\|\lambda\tilde{c} - c\|_1$ is
large for all short vectors $\tilde{c}$. 
%Namely any critical vector must be a good simultaneous Diophantine approximation of the 
%normal vector $c$. 
%\begin{lemma} \label{lem:goodsimultaneousdiophaniteapx}
%If $\tilde{c} \in \setZ^n$ is critical for the vertex $x^* = (\frac{1}{2}+\varepsilon,\ldots,\frac{1}{2} + \varepsilon)$ of $P_c$.
%Then there is a $\lambda > 0$ such that $\|\lambda\tilde{c} - c\|_1 \leq 16\varepsilon\cdot\|c\|_1$.
%\end{lemma}
%\begin{proof}
%Assume that $\tilde{c}$ is indeed critical for $x^*$ and let $\lambda > 0$ be the scalar
%provided by Lemma~\ref{lem:LowerBoundOnMaxCtildeOverPI}. Then
%\[
 %0 \stackrel{\tilde{c}\textrm{ critical}}{\leq}  \tilde{c}x^* - \max_{x \in P_I} \tilde{c}x
%\stackrel{\textrm{Lem~\ref{lem:LowerBoundForMaxCTildeXoverPI}}}{\leq} \left(\frac{1}{2} + \varepsilon\right) \|\tilde{c}\|_1 - \frac{1}{2}\|\tilde{c}\|_1 - \frac{1}{16}\left\|\tilde{c} - \frac{c}{\lambda}\right\|_1 
%= \varepsilon \|\tilde{c}\|_1 - \frac{1}{16}\left\|\tilde{c} - \frac{c}{\lambda}\right\|_1
%\]
%Rearranging yields the claim.
%\end{proof}

\section{Random normal vectors}

%It is a recurring phenomenon in theoretical computer science that
In this section, we will see now, that a random vector $a$ cannot be well approximated by short vectors; later this vector $a$ will be essentially the first half of 
the normal vector $c$. 
In the following, for any vector $a \in \setR^m $, and any index subset $I \subseteq [m]$, we let $(a)_I \in \setR^{|I|}$ be the vector $(a_i, i\in I)$.
%Let $\delta > 0$ be a small enough constant and $D := 2^{m/8}$. %be even. 
%Moreover, define $\alpha:= 2^{12 \delta + 2}$. 
For $D := 2^{m/8}$, pick $a_1,\ldots,a_m \in \{D,\ldots,2D\}$ uniformly and independently at random. 

We first informally describe, why this random vector $a$ is hard to approximate
with high probability. Let us fix values of $\lambda$ and $\varepsilon$ and call an index $i$ \emph{good}, if there is an integer $\tilde{a}_i \in \{ 0,\ldots,o(\frac{1}{\varepsilon})\}$ such that $|\lambda\tilde{a}_i - a_i| \leq O(\varepsilon D)$.
Since we choose $a_i$ from $D$ many possible choices, we have $\Pr[i\textrm{ good}] \leq o(\frac{1}{\varepsilon}) \cdot O(\varepsilon D) \cdot \frac{1}{D} = o(1)$.
For the event ``$\exists\tilde{a}: \|\tilde{a}\|_1 \leq o(\frac{m}{\varepsilon})\textrm{ and } \|\lambda\tilde{a} - a\|_1 \leq O(m\varepsilon D)$'' one needs $\Omega(m)$ many good indices and by standard arguments 
the probability for this to happen is $o(1)^m$. Finally we can argue that the number of
distinct values of $\varepsilon$ and $\lambda$ that need to be considered is $2^{O(m)}$. Thus by the union 
bound, the probability that there are \emph{any} $\varepsilon$, $\lambda$ and $\tilde{a} \in \{ 0,\ldots,o(\frac{m}{\varepsilon})\}^m$
with $\|\lambda\tilde{a}_i - a\|_1 \leq O(\varepsilon mD) = O( \varepsilon \|a\|_1)$ is still upper bounded by $o(1)^m$.

We will now give a formal argument.
%Intuitively, for a fixed $\varepsilon$
%Let us $\Pr[\exists\tilde{a} \in \{ 0,\ldots,\frac{o(1)}{\varepsilon}\}: \|\lambda\tilde{a} - a\|_{\infty} \leq O(\varepsilon D)]$

\begin{lemma} \label{lem:randomknapsack} 
There is a constant $\alpha > 0$ such that for $m$ large enough, 
\begin{equation} \label{eq:randomknapsack}
  \Pr\Big[\exists (\varepsilon,\lambda,\tilde{a}) \in [\tfrac{1}{D},\tfrac{1}{4}] × \setR_{>0} × \setZ^m  : \|\tilde{a}\|_1 \leq \frac{m}{\alpha \varepsilon}\textrm{ and } \|\lambda\tilde{a}-a\|_1 \leq 100\varepsilon m\cdot D\Big] \leq \left(\frac{1}{2}\right)^{m}  % : \frac{1}{D}\leq\varepsilon\leq\frac{1}{4}; \exists\lambda> 0; \tilde{a} \in \setZ^m
\end{equation}
\end{lemma}
\begin{proof}
We want to bound the above probability in \eqref{eq:randomknapsack} by using the union bound over 
all $\lambda > 0$ and all $\varepsilon > 0$. 
%It remains to bound all possible values for $\lambda$. 
%Note that, we are interested in upperbounding 
First of all, $\|\lambda\tilde{a}-a\|_1$ 
is a  piecewise linear function in $\lambda$. Therefore,  
we can restrict our attention to the values $\lambda = \frac{a_i}{\tilde a_i}$
for some $a_i \in \{D, \dots, 2D \}$ and ${\tilde a_i \in \{0, \dots, \frac{m}{\alpha \varepsilon} \}}$. That is, assuming $\varepsilon \geq \frac{1}{D}$, the number of different $\lambda$ values that really matter are 
bounded by $(D+1) \cdot (\frac{m}{\alpha \varepsilon} + 1) \leq (2D)^3$. 
Moreover, we only need to consider those $\varepsilon$, 
where at least one of the bounds $\|\tilde{a}\|_1 \leq \frac{m}{\alpha \varepsilon}$ or 
$\|\lambda\tilde{a}-a\|_1 \leq  100\varepsilon m\cdot D$ is tight\footnote{The reason is that $\varepsilon$ and $\varepsilon'$ with
$\lfloor m/(\alpha\varepsilon)\rfloor=\lfloor m/(\alpha\varepsilon')\rfloor$ and $\lfloor100\varepsilon mD\rfloor=\lfloor100\varepsilon'mD\rfloor$ belong to identical events.}. But $\|\tilde{a}\|_1$ attains at most
$2mD\leq(2D)^2$ many values and $ \|\lambda\tilde{a}-a\|_1$ attains at most $(2D)^4$ many values.
In total the number of relevant values of pairs $(\lambda,\varepsilon)$ is bounded by $(2D)^9 \leq 2^{2m}$.
Thus, by the union bound it suffices to prove that for every \emph{fixed} pair 
$\lambda > 0$ and $\varepsilon $, one has
\begin{equation} \label{eq:Prob1NormOfAsmallForAllShortATilde}
 \Pr\Big[\exists \tilde{a} \in \setZ_{\geq0}^m : \|\tilde{a}\|_1 \leq \frac{m}{\alpha\varepsilon}\textrm{ and } \|\lambda\tilde{a}-a\|_1 \leq 100\varepsilon m\cdot D\Big] \leq 2^{-3m}
\end{equation}
for $\alpha>0$ large enough.
%\begin{comment} 
%\begin{lemma}
%With high probability, for all $(\frac{1}{2})^{\Theta(n)} \leq \varepsilon \leq \frac{1}{4}$ one has
%$L_c(\varepsilon) \geq \Omega(\frac{n}{\varepsilon} )$.
%\end{lemma}
%\begin{proof}
%By monotonicity of $L_c$, it suffices to show that for every \emph{fixed} $\varepsilon \in [(\frac{1}{2})^{\Theta(n)},\frac{1}{4}]$, one has
%\[
%  \Pr\Big[\exists\lambda> 0; \tilde{c} \in \setZ_{\geq0}^n : \|\tilde{c}\|_1 \leq \frac{n}{\alpha \varepsilon}\textrm{ and } \|\lambda\tilde{c}-c\|_1 \leq \varepsilon n\cdot D\Big] \leq \frac{1}{n}
%\]
%\end{comment}
%For the moment, let $\lambda \geq 0$ be fixed, and let us focus on $ \Pr\Big[\exists \tilde{c} \in \setZ_{\geq0}^n : \|\tilde{c}\|_1 \leq \frac{n}{\alpha\varepsilon}\textrm{ and } \|\lambda\tilde{c}-c\|_1 \leq \varepsilon n\cdot D\Big] $. 
Note that, for any vector $\tilde a \in \setZ_{\geq0}^m : \|\tilde{a}\|_1 \leq \frac{m}{\alpha\varepsilon}$ there exists a subset of indices $I \subset [m]$ with $|I| \geq \frac{m}{2}$ such that $\|(\tilde a)_I\|_\infty \leq  \frac{2}{\alpha \varepsilon}$. If not, then $\|\tilde{a}\|_1 > \frac{m}{2}\cdot \frac{2}{\alpha \varepsilon}$  leading to a contradiction. %Then:
%
%\[ \Pr\Big[\exists \tilde{a} \in \setZ_{\geq0}^n : \|\tilde{a}\|_1 \leq \frac{n}{\alpha\varepsilon}\textrm{ and } \|\lambda\tilde{a}-a\|_1 \leq \varepsilon m\cdot D\Big]  \leq \]
%
%\[ \Pr\Big[\exists \tilde{c} \in \setZ_{\geq0}^n \textrm{ and } I \subseteq [n], 
%|I| = \frac{n}{2} : \|(\tilde{c})_I\|_\infty \leq \frac{2}{\alpha \varepsilon}\textrm{ and } \|(\lambda\tilde{c}-c)_I\|_1 \leq \varepsilon n\cdot D\Big] 
%\]
%
Similarly, we can say that there exists a subset of indices $J \subseteq I$, with 
$|J| \geq |I|/2$ such that $\|(\lambda\tilde{a}-a)_J\|_\infty \leq 400 \varepsilon \cdot D$. If not, then $\|(\lambda\tilde{a}-a)_I\|_1 > \frac{m}{4}\cdot 400 \varepsilon \cdot D$ again leading to a contradiction. It follows that the left hand side of \eqref{eq:Prob1NormOfAsmallForAllShortATilde}
is bounded by
%\[ \Pr\Big[\exists \tilde{c} \in \setZ_{\geq0}^n \textrm{ and } I \subseteq [n], 
%|I| = \frac{n}{2} : \|(\tilde{c})_I\|_\infty \leq \frac{2}{\alpha\varepsilon}\textrm{ and } \|(\lambda\tilde{c}-c)_I\|_1 \leq \varepsilon n\cdot D\Big] \leq 
%\]
%
\begin{equation} \label{eq:ProbInfNormOfAsmallForShortAtilde} 
\Pr\Big[\exists \tilde{a} \in \setZ_{\geq0}^m \textrm{ and } J \subseteq [m], 
|J| = \frac{m}{4} : \|(\tilde{a})_J\|_\infty \leq \frac{2}{\alpha\varepsilon}\textrm{ and } \|(\lambda\tilde{a}-a)_J\|_\infty \leq 400 \varepsilon \cdot D\Big] 
\end{equation}
For a fixed index $i \in [m]$, we have 
\begin{eqnarray*}
 \Pr\Big[\exists \tilde{a}_i \in \setZ: \tilde{a}_i \leq \frac{2}{\alpha\varepsilon}\textrm{ and } |\lambda\tilde{a}_i-a_i| \leq 400 \varepsilon \cdot D\Big] &\leq& \frac{1}{D} \sum_{\tilde{a}_i=0}^{2/(\alpha\varepsilon)} \left| \setZ \cap [\lambda\tilde{a}_i - 400\varepsilon D, \lambda\tilde{a}_i + 400\varepsilon D]\right| \\
&\leq& \left(\frac{2}{\alpha\varepsilon}+1\right) \cdot \frac{800 \varepsilon D+1}{D} \leq \frac{1800}{\alpha}. 
\end{eqnarray*}
Here, we use that  $\varepsilon \geq \frac{1}{D}$ and every number $\lambda\tilde{a}_i$ is at distance $400\varepsilon D$ to at most $800\varepsilon D+1$ many integers. 
Moreover, we upperbound the number of all different index subsets of cardinality $m/4$ by $2^m$. It follows that \eqref{eq:ProbInfNormOfAsmallForShortAtilde} can be bounded by
%
%
%\[ \Pr\Big[\exists \tilde{c} \in \setZ_{\geq0}^n \textrm{ and } J \subseteq [n], 
%|J| = \frac{n}{4} : \|(\tilde{c})_J\|_\infty \leq \frac{2}{\alpha\varepsilon}\textrm{ and } \|(\lambda\tilde{c}-c)_J\|_\infty \leq 4 \varepsilon \cdot D\Big] 
%\]
\[
 2^m \cdot \Big(\frac{1800}{\alpha}\Big)^{m/4} 
\leq \Big( \frac{1}{2}\Big)^{3m}\]
for $\alpha > 0$ large enough.
%Putting all together, for $n$ large enough we have:
%
%\[
%  \Pr\Big[\exists\lambda> 0; \tilde{c} \in \setZ_{\geq0}^n : \|\tilde{c}\|_1 \leq \frac{n}{\alpha\varepsilon}\textrm{ and } \|\lambda\tilde{c}-c\|_1 \leq \varepsilon n\cdot D\Big] \leq 2^{2\delta n} \cdot 2^{-3\delta n} \leq \frac{1}{n}
%\]

%%%%%%%%%%%%%%%%%%%%%%%%%%%%%%%%%%%%%%%%%%%%%%%%%%%%%%%%%%%%%%%%%%%%
%%%%%%%%%%%%%%%%%%%%%%%%%%%%%%%%%%%%%%%%%%%%%%%%%%%%%%%%%%%%%%%%%%%%
%%%%%%%%%FAVOURITE CONCENTRATION BOUND%%%%%%%%%%%%%%%%%%%%%%%%%%%%%%
%%%%%%%%%%%%%%%%%%%%%%%%%%%%%%%%%%%%%%%%%%%%%%%%%%%%%%%%%%%%%%%%%%%%
%%%%%%%%%%%%%%%%%%%%%%%%%%%%%%%%%%%%%%%%%%%%%%%%%%%%%%%%%%%%%%%%%%%%

\end{proof}

\section{A $\Omega(n^2)$ bound on the Chvátal rank}

Now we have all tools together, to obtain a quadratic lower bound on the
 Chvátal rank of a $0/1$ polytope.
%Let $0 < \delta < 1$ be a small constant and $D:= 2^{\delta n}$. 
%Let $c \in \setZ^n$ be defined as follows:
%Pick $c_i \in [D, 2D] $ uniformly and independenly at random, for 
%$i = 1, \dots, n - 3(\delta n +2)$. Then, 
%for $j=1, \dots, \delta n + 2$, define 
%$c_{n-3(\delta n +2) + j} = c_{n-2(\delta n +2) + j} =c_{n-(\delta n +2) +j}= 2^{j-1}$.
%   
Let $m := \frac{n}{2}$ and let $a$ be an $m$-dimensional vector according to 
Lemma~\ref{lem:randomknapsack} (i.e. a vector satisfying the
event in \eqref{eq:randomknapsack}). 
%satisfying the as in the
%claim of Theorem~\ref{thm:rank_lower_bound}. 
Let $b = (2^0,2^1,2^2,\ldots,2^{m/8+1})$ be a basis for $\{0,\ldots,2D\}$ (recall that $D = 2^{m/8}$). 
We choose $c := (a,b,b,b,\mathbf{0}) \in \setZ_{\geq 0}^n$ (note that $m + 3\cdot(\frac{m}{8}+2) \leq n$,
so that we can indeed fill the vector $c$ with zero's to obtain $n$ many entries).

%Finally, let $x^* \in \setR^n$ be $x^*:=\{3/4, \dots, 3/4\}$ and $P := \conv\{ \{ x \in \{ 0,1\}^n :  cx \leq \frac{\|c\|_1}{2}\} \cup \{ x^* \} \}$.

\begin{theorem} \label{thm:ChvatalRankAtLeastQuadratic}
The Chvátal rank of $P := P(c,\frac{1}{4})$ is $\Omega(n^2)$.
\end{theorem}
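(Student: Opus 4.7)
The plan is to combine Lemma~\ref{lem:LowerBoundOnMaxCtildeOverPI}, Lemma~\ref{lem:randomknapsack}, and Lemma~\ref{lem:LowerBoundOnRkDependingOnL} into a uniform bound $L_c(\varepsilon) = \Omega(n/\varepsilon)$ on a geometrically long interval $[1/D, 1/32]$. First I would verify that $c$ meets the hypotheses of Lemma~\ref{lem:LowerBoundOnMaxCtildeOverPI} (applied to its positive-coordinate restriction): take $B_1, B_2, B_3$ to be the three index-blocks carrying copies of $b$; each $\{c_i : i \in B_\ell\} = \{2^0, 2^1, \ldots, 2^{m/8+1}\}$ is an additive basis for $\{0, \ldots, 2D\} = \{0, \ldots, \|c\|_\infty\}$, while $\|c\|_\infty = 2D$, $c(B_\ell) \leq 4D$, and $\|c\|_1 \geq \|a\|_1 \geq mD$, so both required ratios are $O(1/m)$ and fall well below $\delta = 1/100$ for $n$ large enough. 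The zero tail of $c$ is harmless: by Lemma~\ref{lem:ThereIsAshorterTightVector} a shortest critical vector may be taken nonnegative, so zero-cost items are always included in the knapsack maximizer, and their contribution only strengthens the bound of Lemma~\ref{lem:LowerBoundOnMaxCtildeOverPI} by an additive $\tilde{c}(S)$, where $S$ denotes the zero-cost index set.

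Next I would translate criticality into a Diophantine approximation condition. Fix $\varepsilon \leq 1/32$ and a critical $\tilde{c} \in \setZ_{\geq 0}^n$. Combining $\tilde{c} x^*(\varepsilon) = (\frac{1}{2} + \varepsilon)\|\tilde{c}\|_1 \geq \max\{\tilde{c} x : x \in P_I\}$ with Lemma~\ref{lem:LowerBoundOnMaxCtildeOverPI} yields $\|\tilde{c} - c/\lambda\|_1 \leq 16\,\varepsilon\,\|\tilde{c}\|_1$ for some $\lambda > 0$. Since $16\varepsilon \leq 1/2$, the triangle inequality gives $\lambda\|\tilde{c}\|_1 \leq 2\|c\|_1$; projecting onto the first $m$ coordinates then shows
\begin{equation*}
  \|\lambda \tilde{a} - a\|_1 \;\leq\; \|\lambda \tilde{c} - c\|_1 \;=\; \lambda\,\|\tilde{c} - c/\lambda\|_1 \;\leq\; 32\,\varepsilon\,\|c\|_1 \;\leq\; 100\,\varepsilon\,m\,D,
\end{equation*}
using $\|c\|_1 \leq 2mD + 12D \leq 3mD$ for $m$ large. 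If in addition $\|\tilde{c}\|_1 < m/(\alpha\varepsilon)$, then $\tilde{a} \in \setZ_{\geq 0}^m$ satisfies $\|\tilde{a}\|_1 \leq \|\tilde{c}\|_1 < m/(\alpha\varepsilon)$, and together with the displayed inequality this instantiates exactly the forbidden event of Lemma~\ref{lem:randomknapsack}. Hence $L_c(\varepsilon) \geq m/(\alpha\varepsilon) = \Omega(n/\varepsilon)$ for every $\varepsilon \in [1/D, 1/32]$.

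Plugging $\gamma = m/\alpha$, $\delta_0 = 1/32$, $\delta_1 = 1/D = 2^{-n/16}$ into Lemma~\ref{lem:LowerBoundOnRkDependingOnL} yields $\rk(P(c, 1/32)) \geq \frac{m}{2\alpha}\ln(D/32) = \Omega(n^2)$. To upgrade to $P(c, 1/4)$, note that $x^*(1/32) = \frac{7}{8}(\frac{1}{2},\ldots,\frac{1}{2}) + \frac{1}{8}\,x^*(1/4)$ (indeed $\frac{7}{16}+\frac{3}{32}=\frac{17}{32}$), and $(\frac{1}{2},\ldots,\frac{1}{2})$ lies in $P_I$: a greedy construction that starts with $B_2 \cup B_3$, adds $a$-items until the load is within $\|c\|_\infty = 2D$ of $\|c\|_1/2$, and then closes the gap using $B_1$ as an additive basis, produces $T \subseteq [n]$ with $c(T) = \|c\|_1/2$; then $(\frac{1}{2},\ldots,\frac{1}{2}) = \frac{1}{2}\mathbf{1}_T + \frac{1}{2}\mathbf{1}_{[n]\setminus T}$ is a convex combination of two integer knapsack solutions. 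Thus $P(c, 1/32) \subseteq P(c, 1/4)$, both share the same integer hull, and set-monotonicity of the Gomory--Chv\'atal operator gives $\rk(P(c, 1/4)) \geq \rk(P(c, 1/32)) = \Omega(n^2)$.

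The main obstacle I anticipate is threading constants cleanly: Lemma~\ref{lem:LowerBoundOnMaxCtildeOverPI} yields a slack of $16\varepsilon\|\tilde{c}\|_1$, and I must match it to the $100\,\varepsilon m D$ tolerance in Lemma~\ref{lem:randomknapsack} through the substitution $\tilde{c} \mapsto \tilde{a}$, which forces controlling $\lambda\|\tilde{c}\|_1$ against $\|c\|_1$ and restricting $\varepsilon$ below a small threshold; the final monotonicity step is what lets us recover the claimed bound for the stated polytope $P(c, 1/4)$. A secondary care-point is the zero entries of $c$, which formally violate the strict positivity hypothesis of Lemma~\ref{lem:LowerBoundOnMaxCtildeOverPI} but are handled by restricting to the positive-cost support after invoking Lemma~\ref{lem:ThereIsAshorterTightVector}.
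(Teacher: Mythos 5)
Your proof follows the paper's approach in all essentials: apply Lemma~\ref{lem:LowerBoundOnMaxCtildeOverPI} to translate criticality into a Diophantine condition, project onto the $a$-block and invoke Lemma~\ref{lem:randomknapsack} to get $L_c(\varepsilon)=\Omega(n/\varepsilon)$ on $[1/D,1/32]$, and then feed that into Lemma~\ref{lem:LowerBoundOnRkDependingOnL}. You are in fact more scrupulous than the published proof at three points it glosses over: (i) $c$ has zero entries, which formally violates the $c\in\setZ_{>0}^n$ hypothesis of Lemma~\ref{lem:LowerBoundOnMaxCtildeOverPI}; your restriction to the positive support plus the additive $\tilde c(S)$ correction resolves this cleanly; (ii) you land exactly inside the $100\,\varepsilon m D$ tolerance of Lemma~\ref{lem:randomknapsack}, whereas the paper's displayed bound $64\varepsilon nD=128\varepsilon mD$ overshoots (harmless, since the constant is absorbed into $\alpha$, but a sloppiness worth noting); and (iii) you observe that Lemma~\ref{lem:LowerBoundOnRkDependingOnL} with $\delta_0=1/32$ only controls $\rk(P(c,1/32))$, not the stated $\rk(P(c,1/4))$, and you explicitly supply the missing monotonicity step that the paper leaves implicit.

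That last step, however, has a parity flaw. You argue $x^*(1/32)\in P(c,1/4)$ by combining $x^*(1/4)$ with $\frac12\mathbf{1}$, and you place $\frac12\mathbf{1}\in P_I$ by constructing $T$ with $c(T)=\|c\|_1/2$. Such a $T$ can exist only if $\|c\|_1$ is even; but $\|c\|_1=\|a\|_1+3(2^{m/8+2}-1)$, where the second term is odd and $\|a\|_1$ has random parity, so for roughly half the random draws $\|c\|_1$ is odd and $\frac12\mathbf{1}\notin P_I$ (indeed, for odd $\|c\|_1$ no $T$ and its complement can both be feasible — compare $c=(1,2)$, where $P_I$ is the segment from $(0,0)$ to $(1,0)$). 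The fix is cheap: anchor at the origin instead. Since $\mathbf{0}\in P_I$ always, for any $0<\delta<\delta'<\frac12$ one has
\[
  x^*(\delta)\;=\;\frac{1/2+\delta}{1/2+\delta'}\,x^*(\delta')\;+\;\Big(1-\frac{1/2+\delta}{1/2+\delta'}\Big)\cdot\mathbf{0},
\]
which exhibits $x^*(\delta)\in P(c,\delta')$, hence $P(c,\delta)\subseteq P(c,\delta')$ with equal integer hulls and $\rk(P(c,\delta))\le\rk(P(c,\delta'))$, with no parity assumption. (Alternatively one may condition the random choice of $a$ on $\|a\|_1$ being odd, which degrades the bound in Lemma~\ref{lem:randomknapsack} only by a constant factor; but the anchor-at-$\mathbf{0}$ fix is simpler.) With that replacement, your argument is correct and matches the paper's intended proof.
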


\begin{proof}
By Lemma~\ref{lem:LowerBoundOnRkDependingOnL}, the statement follows if we show 
that for all $\frac{1}{D} \leq \varepsilon \leq \frac{1}{32}$ one has $L_c(\varepsilon) \geq \Omega(\frac{n}{\varepsilon} )$.

Hence, fix an $\varepsilon$ and let $\tilde{c}$ be the $x^*(\varepsilon)$-critical vector
with minimal $\|\tilde{c}\|_1$.
%By monotonicity of $L_c$, it suffices to show that for every \emph{fixed} $\varepsilon \in [(\frac{1}{2})^{\Theta(n)},\frac{1}{4}]$, one has
%\[
%  \Pr\Big[\exists \tilde{c} \in \setZ_{\geq0}^n : \|\tilde{c}\|_1 \leq \frac{n}{\alpha \varepsilon}\textrm{ and } \tilde{c} \textrm{ is critical for } (\frac{1}{2} + \varepsilon, \dots, \frac{1}{2} + \varepsilon)\Big] \leq \frac{1}{n}
%\]
Obviously, $c$ contains 3 disjoint bases for  the interval $\{0, \dots, \|c\|_\infty\}$.
%For $l=1,2,3$ consider the set of indices $B_l := \{c_i: i \in \{n-l(\delta n +2), n-(l-1)(\delta n +2)\}\}$. By definition, each $B_l$ is an additive basis for the interval $\{1, \dots, 2D\}$,
%and therefore each $B_l$ is an additive basis for
Moreover: 
\[\|c\|_\infty \leq \|b\|_1 \leq 4D \stackrel{n \textrm{ large enough }}{\leq} % \frac{1}{30}(n -(\delta n +2))D \leq 
\frac{1}{100} \|c\|_1. \]
Therefore, we can apply Lemma \ref{lem:LowerBoundOnMaxCtildeOverPI}  to obtain 
\[
\Big(\frac{1}{2}+\varepsilon\Big)\|\tilde{c}\|_1 = \tilde{c}x^*(\varepsilon) \stackrel{\tilde{c}\textrm{ critical}}{\geq} \max\left\{ \tilde{c}x \mid x \in \{ 0,1\}^n; \; cx \leq \frac{\|c\|_1}{2}\right\} \stackrel{\textrm{Lem.~\ref{lem:LowerBoundOnMaxCtildeOverPI}}}{\geq} \frac{1}{2}\|\tilde{c}\|_1 + \frac{1}{16}\cdot \left\|\tilde{c} - \frac{c}{\lambda}\right\|_1
\]
Subtracting $\frac{1}{2}\|\tilde{c}\|_1$ from both sides and multiplying with $\lambda > 0$ 
yields $\frac{1}{16}\|\lambda\tilde{c} - c\|_1 \leq \varepsilon\|\lambda\tilde{c}\|_1$. %On the other hand, the
%reverse triangle inequality yields
%\[
% \|\lambda\tilde{c} - c\|_1 \geq
%\]
We claim that $\|\lambda\tilde{c}\|_1 \leq 2\|c\|_1$, since otherwise by the reverse triangle
inequality
 $\|\lambda\tilde{c} - c\|_1 \geq \|\lambda\tilde{c}\|_1 - \|c\|_1 > \frac{1}{2}\|\lambda\tilde{c}\|_1 \geq 16\varepsilon\|\lambda\tilde{c}\|_1$, 
which is a contradiction. Thus we have $\|\lambda\tilde{c} - c\|_1 \leq 32\varepsilon\|c\|_1$.
Now, let $\tilde{a}$ be the first $m$ entries of $\tilde{c}$,
then
 \[
\|\lambda\tilde{a} - a\|_1 \leq \|\lambda\tilde{c} - c\|_1 \leq 32\varepsilon\|c\|_1 \leq 64\varepsilon nD
\]
But inspecting again the properties of vector $a$ (see Lemma~\ref{lem:randomknapsack}), 
any such vector $\tilde{a}$ must have length $\|\tilde{a}\|_1 \geq \Omega(\frac{m}{\varepsilon})$. 
Since $m=n/2$, this implies $\|\tilde{c}\|_1 \geq \|\tilde{a}\|_1 \geq \Omega(\frac{n}{\varepsilon})$. 
Eventually, we apply Lemma~\ref{lem:LowerBoundOnRkDependingOnL} and obtain
that $\rk(P) \geq \Omega(n \cdot \log( \frac{1/32}{1/D})) = \Omega(n^2)$.
%Rearranging terms yields that $$

%\[
%  \Pr\Big[\exists \tilde{c} \in \setZ_{\geq0}^n : \|\tilde{c}\|_1 \leq \frac{n}{\alpha \varepsilon}\textrm{ and } \tilde{c} \textrm{ is critical for } (\frac{1}{2} + \varepsilon, \dots, \frac{1}{2} + \varepsilon)\Big] \leq 
%\]
%
%\[
%  \Pr\Big[\exists\lambda> 0; \tilde{c} \in \setZ_{\geq0}^n : \|\tilde{c}\|_1 \leq \frac{n}{\alpha \varepsilon}\textrm{ and } \|\lambda\tilde{c}-c\|_1 \leq 16 \varepsilon \|c\|_1  \Big]
%\]
%
%Now we rely on the semi-randomness of our knapsack polytope. Note that $16 \varepsilon \|c\|_1 = \Omega(\varepsilon n \cdot D)$, and that the number of randomly chosen coefficients $c_i$ is  
%$n- 3(\delta n +2) = \Omega(n)$. Therefore, Lemma \ref{lem:randomknapsack} gives:
%
%\[
%  \Pr\Big[\exists\lambda> 0; \tilde{c} \in \setZ_{\geq0}^n : \|\tilde{c}\|_1 \leq \frac{n}{\alpha \varepsilon}\textrm{ and } \|\lambda\tilde{c}-c\|_1 \leq 16 \varepsilon \|c\|_1  \Big] \leq \frac{1}{n}.
%\]
%
%The result follows.
\end{proof}

We close the paper with a remark. A vector $d$ is called \emph{saturated} w.r.t. $P$ if it has an integrality gap of $1$, 
i.e. $ \max\{ dx \mid x \in P\} = \max\{ dx \mid x \in P_I\}$.
Of course, if $d \in \setZ^n$ is saturated, then the GC cut induced by $d$ does not cut off
any point, i.e. $GC_P(d) \cap P = P$.
With this definition, one could rephrase the statement of Theorem~\ref{thm:ChvatalRankAtLeastQuadratic}
as: The vector $c$ needs $\Omega(n^2)$ many iterations to be saturated. 
Note that \cite{ChvatalRankEisenbrandSchulzCombinatorica03} prove that any vector $c \in \setZ^n$
is saturated after $O(n^2 + n \log \|c\|_{\infty})$ many iterations, which gives the tight bound of $O(n^2)$ 
for our choice of $c$.

\subsection*{Acknowledgements}

The authors are very grateful to Michel X. Goemans for useful discussions.

\bibliographystyle{alpha}
\bibliography{chvatalrank}

\appendix

\newpage

%\section{It suffices to consider non-negative critical vectors}

%Let's first observe that we can restrict our attention to non-negative
%critical vectors, since we consider a monotone polytope.

\section{A general upper bound on $L_c(\varepsilon)$ \label{sec:UpperBoundOnLcEpsilon}}

Our quadratic lower bound on the Chvátal rank uses that
we can find a normal vector $c$ such that $L_c(\varepsilon) \geq \Omega(\frac{n}{\varepsilon})$
for a large range of $\varepsilon$. We want to show here that this bound is
asymptotically tight. 
\begin{lemma}  \label{eq:UpperBoundOnLcEpsilon}
For any $c \in \setZ_{\geq0}^n$ and any $0<\varepsilon<\frac{1}{2}$ we have $L_{c}(\varepsilon) \leq \frac{n}{\varepsilon}$.
\end{lemma}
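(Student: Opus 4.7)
The plan is to exhibit a short critical vector $\tilde{c}$ explicitly by integer-rounding an appropriately scaled copy of $c$. We may assume $c \neq \mathbf{0}$ (otherwise $P_I = \{0,1\}^n$ and the statement is essentially vacuous). Set $\lambda := \frac{n}{\varepsilon\|c\|_1}$ and define $\tilde{c} \in \setZ_{\geq 0}^n$ componentwise by $\tilde{c}_i := \lfloor \lambda c_i\rfloor$. Since $\lfloor t\rfloor \leq t$, we immediately obtain the desired norm bound
\[
\|\tilde{c}\|_1 \;\leq\; \lambda\|c\|_1 \;=\; \frac{n}{\varepsilon}.
\]

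It remains to check that $\tilde{c}$ is $x^*(\varepsilon)$-critical. Since $\tilde{c} \leq \lambda c$ componentwise and $P_I \subseteq \{x \in [0,1]^n : cx \leq \|c\|_1/2\}$, every $x \in P_I$ satisfies
\[
\tilde{c}\,x \;\leq\; \lambda\, c\,x \;\leq\; \tfrac{\lambda\|c\|_1}{2} \;=\; \tfrac{n}{2\varepsilon}.
\]
On the other hand, summing the bound $\lfloor t\rfloor \geq t - 1$ over the $n$ coordinates yields the matching lower bound $\|\tilde{c}\|_1 \geq \lambda\|c\|_1 - n = \tfrac{n}{\varepsilon} - n$. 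Combining these,
\[
\tilde{c}\,x^*(\varepsilon) \;=\; \Big(\tfrac{1}{2}+\varepsilon\Big)\|\tilde{c}\|_1 \;\geq\; \Big(\tfrac{1}{2}+\varepsilon\Big)\Big(\tfrac{n}{\varepsilon}-n\Big) \;=\; \tfrac{n}{2\varepsilon} + \tfrac{n}{2} - \varepsilon n \;\geq\; \tfrac{n}{2\varepsilon} \;\geq\; \max_{x \in P_I}\tilde{c}\,x,
\]
where the penultimate inequality uses $\varepsilon \leq \tfrac{1}{2}$. Hence $\tilde{c}$ is critical and $L_c(\varepsilon) \leq \|\tilde{c}\|_1 \leq n/\varepsilon$.

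There is no real obstacle here; the proof is a one-line construction plus a short calculation. The only spot where the hypothesis $\varepsilon < \tfrac{1}{2}$ is used is in absorbing the rounding loss $-n$ in $\|\tilde{c}\|_1$ via $\tfrac{n}{2} - \varepsilon n \geq 0$, which is precisely what lets the simple ``scale-and-round'' vector $\tfrac{n}{\varepsilon\|c\|_1}\cdot c$ be short enough while remaining critical.
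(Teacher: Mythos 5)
Your proof is correct and uses essentially the same approach as the paper: both take the scale-and-round vector $\tilde{c} = \lfloor \tfrac{n}{\varepsilon\|c\|_1} \cdot c\rfloor$ and exploit $\lfloor t\rfloor \in [t-1,t]$. The only cosmetic difference is that you verify criticality via two one-sided bounds ($\tilde{c}x^* \geq \tfrac{n}{2\varepsilon}$ and $\max_{P_I}\tilde{c}x \leq \tfrac{n}{2\varepsilon}$) rather than directly estimating the difference $\tilde{c}(x^*-x)$ as the paper does.
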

\begin{proof}
We provide a simple choice for $\tilde{c}$. 
Let $\delta := \frac{n}{\|c\|_1 \cdot \varepsilon }$ be a scalar, then pick $\tilde{c} :=  \lfloor\delta \cdot c\rfloor \in \setZ_{\geq0}^n$. We have to verify that indeed $\tilde{c}x^* > \tilde{c}x$ for every
point $x \in P_I$. In fact, it is not difficult to prove, using 
that $cx^* \geq cx + \varepsilon\|c\|_1$ (since no point in $P_I$ is better 
than $(\frac{1}{2},\ldots,\frac{1}{2})$ for objective function $c$) and $\frac{1}{\delta}\tilde{c} \approx c$. In more detail, 
we have
\begin{eqnarray*}
  \tilde{c}x^* - \tilde{c}x 
 &\sim& \frac{1}{\delta}\tilde{c}x^* - \frac{1}{\delta}\tilde{c}x \\
&=& \underbrace{cx^* - cx^*}_{=0} + \frac{1}{\delta}\tilde{c}x^* - \frac{1}{\delta}\tilde{c}x + \underbrace{cx - cx}_{=0} \\
&=& \underbrace{cx^* - cx}_{\geq \varepsilon\|c\|_1\textrm{ since }x \in P_I}   - \underbrace{\underbrace{(x^*-x)}_{\in [-1,1]^n}\cdot \underbrace{(c - \frac{1}{\delta} \tilde{c})}_{\in [0,\frac{1}{\delta}]^n }}_{\leq n/\delta} \\ % - x(c - \frac{1}{\delta} \tilde{c}) \\
&\geq& \varepsilon \cdot \|c\|_1 - \frac{n}{\delta} \\ %\underbrace{(\|x^*\|_{\infty}+\|x\|_{\infty})}_{\leq2}\cdot\underbrace{\Big\|\frac{1}{\delta} \lfloor\delta c\rfloor - c\Big\|_1}_{\leq n/\delta} \\
% &\geq& \varepsilon \|c\|_1 - \frac{2n}{\delta} \\
 &\stackrel{\delta = \frac{n}{\varepsilon\|c\|_1} }{=}& 0
\end{eqnarray*}
In other words, $\tilde{c}$ is indeed maximized at $x^*$ and 
our choice for $\tilde{c}$ was valid. Now let us inspect the
length of that vector: 
\[ 
  \|\tilde{c}\|_1 = \| \lfloor\delta c\rfloor\|_1 \leq \delta \|c\|_1 = \frac{n}{\varepsilon\|c\|_1} \cdot \|c\|_1 = \frac{n}{\varepsilon}.
\]
and the claim follows.
\end{proof}

\section{Critical vectors for $c=(1,\ldots,1)$  \label{sec:CriticalVectorsForAllOnesVector}}

%The next step is to prove that GC cuts that might cut off $x^*$ need to belong 
%to long vectors.
\begin{lemma}
Let $c=(1,\ldots,1)$. Let $\tilde{c} \in \setZ^n$ be a vector that is $x^* = (\frac{1}{2}+\varepsilon,\ldots,\frac{1}{2} + \varepsilon)$-critical
with $0<\varepsilon<\frac{1}{2}$. Then $\|\tilde{c}\|_1 > \frac{n}{2}$.
\end{lemma}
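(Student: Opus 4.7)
The plan is to reduce to nonnegative $\tilde{c}$ and then read off the bound from the induced knapsack structure. First, by Lemma~\ref{lem:ThereIsAshorterTightVector}, replacing $\tilde{c}$ by $\tilde{c}^+ \in \setZ_{\geq 0}^n$ preserves $x^*$-criticality and only decreases the $\|\cdot\|_1$-length, so it suffices to prove the bound when $\tilde{c} \in \setZ_{\geq 0}^n$ (and $\tilde{c} \neq 0$, which is the only case where the claimed inequality is non-vacuous; note that any critical $\tilde{c}$ with only non-positive entries must in fact equal $0$, so $\tilde{c}^+ \neq 0$ whenever $\tilde{c}$ is a nonzero critical vector). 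After relabeling coordinates I would assume $\tilde{c}_1 \geq \tilde{c}_2 \geq \cdots \geq \tilde{c}_n \geq 0$.

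The next step is to make the induced knapsack explicit. With $c = \mathbf{1}$ we have $P_I = \conv\{x \in \{0,1\}^n : \sum_i x_i \leq \lfloor n/2 \rfloor\}$, and since $\tilde{c} \geq 0$ the optimum $\max\{\tilde{c}x : x \in P_I\}$ is attained by setting the $k := \lfloor n/2 \rfloor$ largest coordinates to $1$. Writing $T := \sum_{i \leq k} \tilde{c}_i$ and $B := \sum_{i > k} \tilde{c}_i$, the criticality condition reads
\[
\Big(\tfrac{1}{2} + \varepsilon\Big)(T+B) \;=\; \tilde{c}x^* \;\geq\; \max\{\tilde{c}x : x \in P_I\} \;=\; T,
\]
which rearranges to $(1-2\varepsilon)\,T \leq (1+2\varepsilon)\,B$.

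The key step is to exploit integrality. Since $\varepsilon < 1/2$, the rearranged inequality forces $B > 0$: otherwise $T \leq 0$, hence $T = 0$ and $\tilde{c} = 0$, a contradiction. As the $\tilde{c}_i$ are nonnegative integers listed in nonincreasing order, $B > 0$ forces $\tilde{c}_{k+1} \geq 1$ (else $\tilde{c}_{k+1} = \cdots = \tilde{c}_n = 0$ and $B = 0$). By the sorting, $\tilde{c}_i \geq 1$ for every $i \leq k$ as well, so $T \geq k$ and $B \geq 1$, yielding $\|\tilde{c}\|_1 \geq k + 1 = \lfloor n/2 \rfloor + 1 > n/2$ for both parities of $n$.

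I do not anticipate a real obstacle. The only point that requires any care is bootstrapping the trivial $T \geq 0$ up to $T \geq k$; this is precisely where integrality of $\tilde{c}$ enters and where the strict inequality $\|\tilde{c}\|_1 > n/2$ comes from.
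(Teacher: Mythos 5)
Your proof is correct and takes essentially the same approach as the paper: reduce to nonnegative $\tilde{c}$ via Lemma~\ref{lem:ThereIsAshorterTightVector}, observe that with $c=\mathbf{1}$ the knapsack optimum over $P_I$ is the sum of the $\lfloor n/2\rfloor$ largest entries, and let integrality force more than $n/2$ nonzero coordinates. You phrase it directly via the $T$/$B$ split whereas the paper argues by contradiction using the indicator of $\supp(\tilde{c})$ as the competing knapsack solution, but the underlying observation is identical.
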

\begin{proof}
Let $\tilde{c}$ be a vector such that $ \tilde{c}x^* \geq \max\{ \tilde{c}x \mid x \in P_I\}$. 
Assume for the sake of contradiction that $|\supp(\tilde{c})| \leq \frac{n}{2}$. 
It suffices to find a vector $\tilde{x} \in P_I$ so that $\tilde{c}\tilde{x} > \tilde{c}x^*$ and we have our contradiction.
We know from Lemma~\ref{lem:ThereIsAshorterTightVector}, that the shortest 
 $x^*$-critical vectors will be non-negative, so we assume  $\tilde{c} \geq \mathbf{0}$. 
Define $\tilde{x} \in \{ 0,1\}^n$ with
\[
\tilde{x}_i := \begin{cases} 1 & \tilde{c}_i\neq0 \\ 0 & \textrm{otherwise} \end{cases}
\]
Then $\|\tilde{x}\|_1 = |\supp(\tilde{c})| \leq \frac{n}{2}$ hence $\tilde{x} \in P_I$.
Moreover $\tilde{c}\tilde{x} = \|\tilde{c}\|_1$. On the other hand $\tilde{c}x^* = (\frac{1}{2} + \varepsilon) \|\tilde{c}\|_1$. 
Thus  $\tilde{c}$ does not attain the maximum at  $x^*$ and the claim follows by contradiction. 
\end{proof}

\end{document}